\newtheorem{theorem}{Theorem}[section]
\newenvironment{proof}{{\it Proof :~}}{\hfill $\diamondsuit$}
\newtheorem{remark}{Remark}
\newtheorem{example}{Example}
\def\d{\textnormal{d}}
\DeclareMathOperator*{\He}{Sym}
\DeclareMathOperator{\eps}{\varepsilon}
\DeclareMathOperator{\co}{\mathbf{co}}
\begin{document}

\begin{frontmatter}

\title{Convex conditions for robust stability analysis and stabilization of linear aperiodic impulsive and sampled-data systems under dwell-time constraints}

\author{Corentin Briat}\ead{briatc@bsse.ethz.ch,corentin@briat.info}\ead[url]{http://www.briat.info}
%\author[Second]{ and Alexandre Seuret}\ead{Alexandre.Seuret@gipsa-lab.grenoble-inp.fr}\ead[url]{http://www.gipsa-lab.fr/\textasciitilde alexandre.seuret}

\address{Swiss Federal Institute of Technology--Z\"{u}rich (ETH-Z), Department of Biosystems Science and Engineering (D-BSSE), Switzerland.}
%\address[Second]{CNRS, GIPSA-Lab, Control Systems Department, France.}

%
%\author{Corentin Briat$^\dag$ and Alexandre Seuret$^\ddag$
%\thanks{$^\dag$Corentin Briat is with the Department of Biosystems Science and Engineering (D-BSSE), Swiss Federal Institute of Technology--Zurich (ETHZ), Mattenstrasse 26, 4058 Basel, Switzerland; email: {\tt  corentin@briat.info, briatc@bsse.ethz.ch}; url:~{\tt http://www.briat.info}}
%\thanks{$^\ddag$Alexandre Seuret is with the Department of
%Automatic Control, Gipsa-lab, 961 rue de la Houille Blanche, BP 46,
%38402 Grenoble Cedex, France,
%email: {\tt alexandre.seuret@gipsa-lab.grenoble-inp.fr}}
%\thanks{The work of A. Seuret is supported by the FeedNetBack project, FP7-
%ICT-2007-2: http://www.feednetback.eu/.}}

\begin{keyword}
Impulsive systems; sampled-data systems; uncertain systems; stability; stabilization; discontinuous Lyapunov functions
\end{keyword}

\begin{abstract}
Stability analysis and control of linear impulsive systems is addressed in a hybrid framework, through the use of continuous-time time-varying discontinuous Lyapunov functions. Necessary and sufficient conditions for stability of impulsive systems with periodic impulses are first provided in order to set up the main ideas. Extensions to stability of aperiodic systems under minimum, maximum and ranged dwell-times are then derived. By exploiting further the particular structure of the stability conditions, the results are non-conservatively extended to quadratic stability analysis of linear uncertain impulsive systems. These stability criteria are, in turn, losslessly extended to stabilization using a particular, yet broad enough, class of state-feedback controllers, providing then a convex solution to the open problem of robust  dwell-time stabilization of impulsive systems using hybrid stability criteria. Relying finally on the representability of sampled-data systems as impulsive systems, the problems of robust stability analysis and robust stabilization of periodic and aperiodic uncertain sampled-data systems are straightforwardly solved using the same ideas. Several examples are discussed in order to show the effectiveness and reduced complexity of the proposed approach.
\end{abstract}

\end{frontmatter}

\section{Introduction}

Impulsive systems \citep{Bainov:89, Hespanha:08, Michel:08,Goebel:09, Briat:11l} are an important class of hybrid systems exhibiting both continuous- and discrete-time dynamics. The discrete-time part, which is only active at certain time instants $t_k$, $k\in\mathbb{N}$, introduces discontinuities in the overall trajectories of the system.  %They arise in several fields such as epidemiology \citep{Stone:00,Briat:09h}, sampled-data and networked control systems \citep{Sun:91,Sivashankar:94,Naghshtabrizi:08}, reset systems \citep{Nesic:11}, forestry \citep{Verriest:09d}, etc.
Analyzing them usually relies on the use of Lyapunov functions and input-to-state stability/nonlinear small-gain ideas \citep{Hespanha:08,Nesic:11,Dashkovskiy:12}, Lyapunov functionals \cite{Naghshtabrizi:08} or, more recently, another type of functionals, verifying certain boundary conditions, referred to as \emph{looped-functionals} \citep{Briat:11l, Briat:13b}. When the impulses occur periodically, the system can be viewed as an LTI discrete-time system which can be studied using discrete-time Lyapunov theory. When impulses arrive at irregular times (aperiodic regime), the discrete-time system becomes time-varying and specific stability concepts should then be considered. The notion of \emph{dwell-time}, i.e. the time between two successive discrete events defined as $T_k:=t_{k+1}-t_k$, has been introduced early in the literature \cite{Morse:96,Hespanha:99} and has been proven to be very useful for the analysis of switched systems. In the case of impulsive systems, dwell-times more specifically correspond to the times between two consecutive impulses. Impulsive systems can therefore be identified through the properties of the sequence of impulse instants $\{t_k\}$, and a relevant stability notion can therefore be considered. When the sequence of impulsive instants is arbitrary, i.e. $T_k>0$, we talk about stability under arbitrary dwell-time, whereas stability under ranged dwell-time is defined for sequences verifying $T_k\in[T_{min},T_{max}]$. Stability under minimum and maximum dwell-time address the cases $T_k\ge\bar{T}$ and $T_k\le\bar{T}$, respectively.

\vspace{-2mm}Stability under dwell-time constraints can be analyzed in several different ways. Lyapunov approaches based on a separate worst-case convergence analysis  (i.e. $\alpha$-stability) of the distinct parts of the impulsive system \cite{Hespanha:99,Hespanha:08} are very convenient to work with when dealing with uncertainties, or when control design is the main goal, principally due to their convexity properties. They may, however, be unable to yield very accurate estimates for dwell-times \cite{Geromel:06b,Briat:13b} since they may not capture the possible interplay between the continuous- and discrete-parts. Discrete-time approaches, however, exhibit much less conservatism, but are, in the present state-of-the-art, difficult to adapt to uncertain systems and aperiodic systems or to extend to control design, mainly due to a lack of convexity. Hybrid stability conditions (also referred to as ``mixed stability conditions'' in the following) consisting of coupled continuous-time and discrete-time criteria have been shown to yield more accurate estimates for minimum dwell-time for both linear switched systems \cite{Geromel:06b,Chesi:12} and linear impulsive systems \cite{Briat:11l,Briat:12h}. The main difficulties when considering hybrid conditions lie in the nonconvex dependence on the system matrices (due to the presence of a discrete-time condition), that complicates the extensions to both time-invariant and time-varying uncertain systems, and to control design. %Considering such mixed conditions is, moreover, strengthened by the fact that the considered type of Lyapunov functions and type of conditions are simple instances of a general converse Lyapunov result proved in \cite{Wirth:05} in the context of linear switched systems. The main difficulties in using hybrid conditions lie in the nonconvex dependence on the system matrices (due to the presence of a discrete-time condition), leading then to a difficult extension to both time-invariant and time-varying uncertain systems, and to control design. Looped-functionals \citep{Briat:11l,  Briat:13b}, which allow to convert discrete-time conditions into convex conditions, have made possible the extension of these mixed conditions to uncertain and aperiodic impulsive systems in a quite simple way, yet not computationally cheap. The problem of designing robust control laws with prescribed dwell-time using mixed stability conditions remains, however, a challenging and open problem.

\vspace{-2mm}Periodic and aperiodic sampled-data systems, arising for instance in digital control \citep{Chen:95} or networked control systems \citep{Hespanha:07}, are intimately connected to impulsive systems since any sampled-data system can be equivalently represented as an impulsive system. Several approaches have been developed to analyze sampled-data systems: discrete-time approaches
\citep{Fujioka:09a,Oishi:10,Cloosterman:10,Donkers:11}, input-delay approaches  \citep{Teel:98b,Fridman:04,Fridman:10}, robust analysis techniques \citep{Mirkin:07,Fujioka:09b,Kao:13}, impulsive/hybrid systems formulation \citep{Sun:91,Sivashankar:94,Dai:10,Briat:11l,Briat:12h}, and the use of looped-functionals either considering directly the sampled-data system formulation \citep{Seuret:12} or the impulsive system formulation \citep{Briat:11l,Briat:12h}.
%The approach developed in this article aims at proposing an alternative way for characterizing stability of impulsive systems that encompasses and gathers several interesting properties of the aforementioned approaches.
%As in the case of impulsive systems, the aforementioned approaches have both benefits and drawbacks. Indeed, while functionals or continuous-time robust techniques are suitable for robust analysis thanks to the linear or convex dependence on the system matrices, they are, unfortunately, not very adapted to control design due to an increased number of decision matrices leading to cumbersome bilinear (nonconvex) terms in the design conditions; see e.g. \citep{Fridman:04,Naghshtabrizi:08,Briat:11l,Briat:12h}. Discrete-time approaches are, however, very well-suited for control design thanks to the presence of a single decision variable, i.e. the Lyapunov matrix, but may fail to yield tractable criteria when parametric uncertainties enter the picture, mainly due to the presence of exponential terms in the stability conditions; see e.g. \citep{Oishi:10}. These exponential conditions are also limited to LTI sampled-data systems.
These approaches have exactly the same benefits and drawbacks as in the case of impulsive systems.

\vspace{-2mm}The rationale for using mixed stability criteria for analyzing switched and impulsive systems \citep{Geromel:06b,Chesi:12,Briat:11l,Briat:12h,Briat:13b} lies in the reduced (possibly vanishing) conservatism \cite{Wirth:05} of the conditions, opposed to continuous-time results based on rates of convergence of Lyapunov functions, see e.g. \citep{Morse:96,Hespanha:99,Hespanha:08}. Hybrid stability criteria are therefore important to consider in order guarantee accuracy, but should be characterized in such a way that \emph{robustness analysis and control design remain possible}. A first step towards such a result has been made very recently by using looped-functionals \citep{Briat:11l,Briat:12h,Briat:13b}, a specific type of functionals defined on a lifted state-space which encode a discrete-time condition as a convex condition in the system matrices, a very suitable feature for robust stability analysis. However, the structure of the conditions prevents the derivation of tractable design criteria due to the presence of multiple decision matrices, inexorably leading to high computational cost and nonconvex terms in the synthesis conditions. The proposed approach, based on time-varying continuous-time discontinuous Lyapunov functions, combines features of the continuous-time and hybrid approaches  by leading to necessary and sufficient stability and stabilization conditions which are convex in the system matrices and in the decision variables (Lyapunov and controller variables), together with a lower complexity than by using looped-functionals.

\vspace{-2mm}The contribution of this paper lies on different levels. First of all, necessary and sufficient conditions for stability of impulsive systems with periodic impulses are derived in Section \ref{sec:stab_is} from the use of a specific discontinuous Lyapunov function. The advantage of the use of such Lyapunov functions lies in a reduced computational complexity over the use of looped-functional, while accuracy is mostly preserved. The periodic case is then extended to cope with aperiodicity in impulse arrival times (i.e. minimum, maximum, and ranged dwell-times) and time-varying parametric uncertainties. Necessary and sufficient results for discrete-time quadratic stability are provided, again with a reduced computational complexity. By relying on non-conservative algebraic manipulations, these results are further exactly adapted in Section \ref{sec:stabz_is} to quadratic (robust) stabilization using a particular class of state-feedback controllers.  More concisely, quadratic stabilization with prescribed minimum, maximum or ranged dwell-times can be expressed as convex optimization problems. The approach is fully generic and can be applied to any linear impulsive system. Exploiting finally, in Section \ref{sec:sds}, the representability of sampled-data systems as impulsive systems, the results are then adapted to sampled-data systems. Convex necessary and sufficient conditions for quadratic stabilization of aperiodic uncertain time-varying sampled-data systems are obtained. Examples and comparisons with several existing results are discussed in the related sections.

%\textbf{Outline:} Stability of periodic and aperiodic impulsive systems is treated in Section \ref{sec:stab_is}. The results are then extended to stabilization in Section \ref{sec:stabz_is}. Sampled-data systems are considered in Section \ref{sec:sds}. Examples are treated in the related sections.

\vspace{-2mm}\textbf{Notations:} The set of $n\times n$ (positive definite) symmetric matrices is denoted by ($\mathbb{S}_{\succ0}^n$) $\mathbb{S}^n$. Given two symmetric matrices $A,B$, the inequality $A\succ(\succeq) B$ means that $A-B$ is positive (semi)definite. Given a square real matrix $A$, the notation $\He(A)$ stands for the sum $A+A^T$.

\section{Stability analysis of periodic and aperiodic impulsive systems}\label{sec:stab_is}

%In the same vein as looped-functionals, the proposed approach is a hybrid approach yielding stability conditions convex in the system matrices. This allows us to take advantage of the structure of continuous-time criteria while benefiting of the accuracy of discrete-time criteria, as emphasized in past works such as \citep{Briat:11l,Briat:12h}. To develop further the approach,
In this section, linear impulsive systems of the form
\begin{equation}\label{eq:impsyst}
  \begin{array}{lcl}
    \dot{x}(t)&=&Ax(t),\ t\ne t_k\\
    x(t)&=&Jx^-(t),\ t=t_k
  \end{array}
\end{equation}
are considered where $x\in\mathbb{R}^n$ is the state of the system and $x^-(t_k)$ stands for the left-limit of $x(s)$ at $s=t_k$, i.e. $\textstyle{x^-(t_k)=\lim_{s\uparrow t_k}x(s)}$. The system matrices $A$ and $J$ may be uncertain time-varying, this will be explicitly mentioned when this is the case. The sequence of impulse instants $\{t_k\}_{k\in\mathbb{N}}$, $t_k>0$, is assumed to have positive increments $T_k:=t_{k+1}-t_k>\epsilon>0$ that are bounded away from 0. Defined as such, the sequence $\{t_k\}_{k\in\mathbb{N}}$ does not admit any accumulation point (we exclude then any Zeno motion) and grows unbounded. Note that the sequence of impulse instants may or may not depend on the state of the system. In the following, we will make no distinction between these two cases since impulse sequences will be solely characterized in terms of dwell-time constraints.

%We derive first necessary and sufficient conditions for stability of impulsive systems with periodic impulses which will serve as a groundwork for more advanced results on aperiodic impulsive systems with minimum, maximum and ranged dwell-times. These latter results are then finally extended to uncertain systems. Only the result on periodic impulsive systems is generalized to uncertain systems, all the others straightforwardly follow from the same lines.

\subsection{Impulsive systems with periodic impulses}

%At first sight, the case of periodic impulsive systems, i.e. $T_k=\bar{T}$, $k\in\mathbb{N}$, is of lesser interest since necessary and sufficient conditions for characterizing their stability already exist. The periodic case is, however, much more interesting than we may think, and this, for two reasons. First of all, this result allows us to present the main ideas in a simple and clear manner, easily generalizable to more elaborated cases such as time-varying impulsive systems. Second, the periodic result will turn out to be directly involved in aperiodic stability analysis under minimum and maximum dwell-times. %The result below can be readily generalized to systems with time-varying matrices at the expense of more cumbersome notations, this extension is thus omitted.

The case of periodic impulses is addressed fist in order to introduce the main ideas.

%The following result considers the impulsive systems case:
%
\begin{theorem}[Periodic impulses]\label{th:imp_p}
 Let us consider the system \eqref{eq:impsyst} with periodic impulses, i.e. $T_k=\bar{T}$, $k\in\mathbb{N}$. Then, the following statements are equivalent:
 \begin{enumerate}
   \item The impulsive system (\ref{eq:impsyst}) with $\bar{T}$-periodic impulses is asymptotically stable.
   \item The discrete-time transition matrix $\Psi(\bar{T}):=e^{A\bar{T}}J$  is Schur\footnote{A matrix is Schur (or Schur stable) if all its eigenvalues lie in the unit disc.}.
   \item There exists a matrix $P\in\mathbb{S}_{\succ0}^n$ such that the LMI
    \begin{equation}\label{eq:stabmono}
       J^Te^{A^T\bar{T}}Pe^{A\bar{T}}J-P\prec0
    \end{equation}
    holds or, equivalently, the quadratic form $V(x)=x^TPx$ is a discrete-time Lyapunov function for the LTI discrete-time system $z_{k+1}=e^{A\bar{T}}Jz_k$.
   %
   %\item The discontinuous Lyapunov function $V_d$ defined as
%   \begin{equation}
%     V_d(x,t)=x^T\hat{Q}(t)x
%   \end{equation}
%   with $\hat{Q}(t_k+\tau)=Q(\tau)$, $Q(\tau)\in\mathbb{S}^n$, $\tau\in[0,T_k)$, $Q(0)\succ0$, $Q$ differentiable, is pointwise-decreasing over $\{t_k\}_{k\in\mathbb{N}}$ and along the trajectories of the system~\eqref{eq:impsyst}, i.e. there exists $\zeta>0$ such that
%   \begin{equation}\label{eq:glabglab}
%     V_d(x(t_{k+1}),t_{k+1})-V_d(x(t_k),t_{k})<-\zeta||x(t_k)||_2^2
%   \end{equation}
%   holds for all $k\in\mathbb{N}$ and $x(t_k)\in\mathbb{R}^n$.
    %
   \item There exist a differentiable matrix function $R:[0,\bar{T}]\mapsto\mathbb{S}^n$, $R(0)\succ0$, and a scalar $\eps>0$ such that the LMIs
  \begin{equation}\label{eq:c1}
    A^TR(\tau)+R(\tau)A+\dot{R}(\tau)\preceq0
  \end{equation}
  and
  \begin{equation}\label{eq:c2}
    J^TR(0)J-R(\bar{T})+\eps I\preceq0
  \end{equation}
  hold for all $\tau\in[0,\bar{T}]$.
   \item There exist a differentiable matrix function $S:[0,\bar{T}]\mapsto\mathbb{S}^n$, $S(\bar{T})\succ0$, and a scalar $\eps>0$ such that the LMIs
  \begin{equation}\label{eq:c1b}
    A^TS(\tau)+S(\tau)A-\dot{S}(\tau)\preceq0
  \end{equation}
  and
  \begin{equation}\label{eq:c2b}
    J^TS(\bar{T})J-S(0)+\eps I\preceq0
  \end{equation}
  hold for all $\tau\in[0,\bar{T}]$.
 \end{enumerate}
\end{theorem}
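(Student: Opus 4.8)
The plan is to route all equivalences through the discrete-time condition: I would first dispatch (a)$\Leftrightarrow$(b)$\Leftrightarrow$(c) by classical arguments, then close the loop (c)$\Rightarrow$(d)$\Rightarrow$(b) to fold in the time-varying LMIs, and finally obtain (d)$\Leftrightarrow$(e) by a time-reversal substitution. The payoff of this ordering is that the only genuinely new work lies in linking the flow/jump LMIs to the Stein inequality \eqref{eq:stabmono}.

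For (a)$\Leftrightarrow$(b) I would integrate the flow over one period. Writing $\xi_k:=x^-(t_k)$ for the state just before the $k$-th impulse, the periodicity $T_k=\bar{T}$ gives $\xi_{k+1}=e^{A\bar{T}}J\xi_k=\Psi(\bar{T})\xi_k$, hence $\xi_k=\Psi(\bar{T})^k\xi_0$. Since $\sup_{\tau\in[0,\bar{T}]}\|e^{A\tau}\|<\infty$, on each interval the whole trajectory is bounded by a fixed constant times $|\xi_k|$, so asymptotic stability of \eqref{eq:impsyst} is equivalent to $\Psi(\bar{T})^k\to0$, i.e. to $\Psi(\bar{T})$ being Schur. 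The equivalence (b)$\Leftrightarrow$(c) is then just the standard discrete-time Lyapunov theorem applied to $z_{k+1}=\Psi(\bar{T})z_k$.

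The core is the pair (c)$\Rightarrow$(d)$\Rightarrow$(b). For (c)$\Rightarrow$(d) I would exhibit the explicit candidate $R(\tau):=e^{A^T(\bar{T}-\tau)}Pe^{A(\bar{T}-\tau)}$. Using that $A$ commutes with $e^{A(\bar{T}-\tau)}$, a direct differentiation gives $A^TR(\tau)+R(\tau)A+\dot{R}(\tau)=0$, so \eqref{eq:c1} holds with equality; moreover $R(0)=e^{A^T\bar{T}}Pe^{A\bar{T}}\succ0$ and $R(\bar{T})=P$, so \eqref{eq:c2} reads $J^Te^{A^T\bar{T}}Pe^{A\bar{T}}J-P+\eps I\preceq0$, which holds for $\eps>0$ small because \eqref{eq:stabmono} is strict. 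For the converse (d)$\Rightarrow$(b) the key observation is that $W(\tau):=e^{A^T\tau}R(\tau)e^{A\tau}$ obeys $\dot{W}(\tau)=e^{A^T\tau}(A^TR(\tau)+R(\tau)A+\dot{R}(\tau))e^{A\tau}\preceq0$, whence $e^{A^T\bar{T}}R(\bar{T})e^{A\bar{T}}\preceq R(0)$. Congruence of \eqref{eq:c2} by $e^{A\bar{T}}$ then yields $e^{A^T\bar{T}}J^TR(0)Je^{A\bar{T}}+\eps e^{A^T\bar{T}}e^{A\bar{T}}\preceq e^{A^T\bar{T}}R(\bar{T})e^{A\bar{T}}\preceq R(0)$, so that $\Phi^TR(0)\Phi\prec R(0)$ with $\Phi:=Je^{A\bar{T}}$ and $R(0)\succ0$. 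Thus $\Phi$ is Schur, and since $\Phi=Je^{A\bar{T}}$ is similar to $\Psi(\bar{T})=e^{A\bar{T}}J$ through conjugation by $e^{A\bar{T}}$, the matrix $\Psi(\bar{T})$ is Schur, which is (b).

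Finally, (d)$\Leftrightarrow$(e) follows from the substitution $S(\tau):=R(\bar{T}-\tau)$, which exchanges the endpoints $\tau=0$ and $\tau=\bar{T}$, turns $\dot{R}$ into $-\dot{S}$, and thereby maps \eqref{eq:c1}--\eqref{eq:c2} onto \eqref{eq:c1b}--\eqref{eq:c2b} and back. I expect the main obstacle to be the converse direction (d)$\Rightarrow$(b): the natural Lyapunov certificate is $R(0)$, the only matrix assumed positive definite (the intermediate values $R(\tau)$ need not be sign-definite), and pairing it with the flow forces the transition matrix $Je^{A\bar{T}}$ rather than $e^{A\bar{T}}J$. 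Recognizing that these two are similar, and correctly tracking the matrix orderings through the congruence step, is where the argument requires care.
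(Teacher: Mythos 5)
Your proposal is correct, and its overall architecture --- routing all equivalences through the discrete-time Stein condition and obtaining (e) from (d) by the time-reversal substitution $S(\tau)=R(\bar{T}-\tau)$ --- matches the paper's. The genuine divergence is in (c)$\Rightarrow$(d). The paper proves this implication by parametrizing the \emph{entire} solution set of the differential inequality \eqref{eq:c1} via a slack term $W(\tau)\succeq0$, fixing $R(\bar{T})$ as the unique solution of the Stein equation $J^Te^{A^T\bar{T}}R(\bar{T})e^{A\bar{T}}J-R(\bar{T})=-Y$, and then arguing that $W$ and $Y$ can be chosen so that \eqref{eq:c2} holds; as a by-product this shows that \eqref{eq:c1} is feasible independently of stability and that any solution $R$ with the stated boundary conditions is positive definite on all of $[0,\bar{T}]$ --- side information the paper exploits later when discussing why positivity of $R(\tau)$ need not be imposed computationally. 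You instead exhibit the single explicit certificate $R(\tau)=e^{A^T(\bar{T}-\tau)}Pe^{A(\bar{T}-\tau)}$, which satisfies \eqref{eq:c1} with equality and turns \eqref{eq:c2} exactly into the strict inequality \eqref{eq:stabmono}; this is shorter, fully rigorous, and sidesteps the paper's somewhat loosely worded step ``since $W(s)$ and $Y\succ0$ are arbitrary, we can choose them such that\ldots''. Your converse also lands slightly differently: the paper substitutes \eqref{eq:c2} into the integrated flow inequality to recover \eqref{eq:stabmono} directly with $P=R(\bar{T})$ (noting $R(\bar{T})\succeq J^TR(0)J+\eps I\succ0$), whereas you use $R(0)$ as the Lyapunov certificate for $\Phi=Je^{A\bar{T}}$ and then invoke the similarity $e^{-A\bar{T}}\bigl(e^{A\bar{T}}J\bigr)e^{A\bar{T}}=Je^{A\bar{T}}$ --- equivalent in substance, and your monotonicity argument for $\tau\mapsto e^{A^T\tau}R(\tau)e^{A\tau}$ is precisely the paper's ``integrate \eqref{eq:c1}'' step made explicit. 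In short, your route establishes the equivalences with less machinery, at the cost of the structural information about the solution set that the paper's construction delivers.
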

\begin{proof}
The proof that (a) $\Leftrightarrow$ (b) $\Leftrightarrow$ (c) can be found in \cite{Briat:11l}.

\textbf{Proof of (d) $\Rightarrow$ (c):} Assume (d) holds. Integrating \eqref{eq:c1} over $[0,\bar{T}]$, pre-  and post-multiplying by $J^T$ and $J$ implies that the LMIs
%\begin{equation}
%  e^{A^T\bar{T}}R(\bar{T})e^{A\bar{T}}-R(0)\preceq0
%\end{equation}
%and
\begin{equation}\label{eq:djksds}
  J^Te^{A^T\bar{T}}R(\bar{T})e^{A\bar{T}}J-J^TR(0)J\preceq0
\end{equation}
holds. From \eqref{eq:c2}, we have that $R(\bar{T})\succ0$ and $J^TR(0)J\preceq R(\bar{T})-\eps I$. Substituting then for $J^TR(0)J$ in \eqref{eq:djksds} yields
\begin{equation}
  J^Te^{A^T\bar{T}}R(\bar{T})e^{A\bar{T}}J-R(\bar{T})\preceq-\eps I
\end{equation}
which therefore implies that \eqref{eq:stabmono} holds with $P=R(\bar{T})\succ0$. The proof is complete.

\textbf{Proof of (c) $\Rightarrow$ (d):} The proof is structured as follows: first, we prove that \eqref{eq:c1} admits solutions regardless of the stability of the system, showing that this condition can be assumed to be satisfied without loss of generality. The second part of the proof consists of combining statement (c) with the solution set of \eqref{eq:c1} to prove that \eqref{eq:c2} holds.

Assume \eqref{eq:stabmono} holds with $P=R(\bar{T})\succ0$ and some $Y\succ0$ as
\begin{equation}\label{eq:djqsodjsopd}
       J^Te^{A^T\bar{T}}R(\bar{T})e^{A\bar{T}}J-R(\bar{T})=-Y.
\end{equation}
Since $e^{A\bar{T}}J$ is Schur, then the above matrix equation admits a unique solution $R(\bar{T})\succ0$ \citep{Gahinet:90}.
%\begin{equation}\label{eq:exR0}
%  R(\bar{T})=\sum_{i=0}^\infty\left(J^Te^{A^T\bar{T}}\right)^iY\left(e^{A\bar{T}}J\right)^i\succ0.
%\end{equation}
%which is well-defined from the  $Je^{A\bar{T}}$ is Schur. The validity of the expression can be easily checked by plugging back \eqref{eq:exR0} into \eqref{eq:djqsodjsopd}.
%
%\footnote{To see this let $X_1$ and $X_2$ be such that $M^T X_iM-X_i=Y$ where $M$ Schur. Then, we have $M^T X M-X=0$ with $X:=X_1-X_2$. This can be vectorized as $(M^T\otimes M^T-I)\vect(X)=0$. Since $M$ is Schur, then $M^T \otimes M^T$ is Schur as well, and the matrix $M^T \otimes M^T-I$ is nonsingular.  The only solution is hence $X=0$ and thus the solution to \eqref{eq:djqsodjsopd} is unique.}:
%\begin{equation}\label{eq:exR0}
%  R(0)=\sum_{i=0}^\infty\left(e^{A^T\bar{T}}J^T\right)^iY\left(Je^{A\bar{T}}\right)^i\succ0
%\end{equation}
%which is well-defined since $Je^{A\bar{T}}$ is Schur. The validity of the expression can be easily checked by substituting back \eqref{eq:exR0} into \eqref{eq:djqsodjsopd}.
%
The set of all solutions $R(\tau)$ to \eqref{eq:c1} can be defined as the set of solutions of the matrix equality
\begin{equation}\label{eq:dkjsodj}
   A^TR(\tau)+R(\tau)A+\dot{R}(\tau)=-W(\tau),\ W(\tau)\succeq0
\end{equation}
where $W(\tau)$ is a continuous function w.l.o.g. Given $W(\tau)$, the unique solution to \eqref{eq:dkjsodj} is given by
\begin{equation}\label{eq:sol}
\begin{array}{lcl}
    R(\tau)&=&e^{-A^T\tau}R(0)e^{-A\tau}\\
    &&-\int_0^\tau e^{-A^T(\tau-s)}W(s)e^{-A(\tau-s)}\d s,\ \tau\in[0,\bar{T}]
\end{array}
\end{equation}
where $R(\bar{T})\succ0$ is defined by \eqref{eq:djqsodjsopd}. We have proved that \eqref{eq:dkjsodj} can be considered as fulfilled, independently of the stability of the system, which concludes the first part of the proof.
%and is positive definite for all $\tau\in[0,\bar{T}]$ if and only if
%\begin{equation}\label{eq:glab}
%  R(0)\succ\int_0^{\bar{T}} e^{A^Ts}Z(s)e^{As}ds.
%\end{equation}
%%
%Using the facts that the initial condition $R(0)$ can be chosen such that $R(0)\succ0$, and that %$Z(s)\succeq0$ can be chosen as small as desired, then inequality \eqref{eq:glab} is always satisfied.
The second part of the proof consists of deriving first, from expression \eqref{eq:sol}, the equation
\begin{equation}\label{eq:kdpsdo}
  e^{A^T\bar{T}}R(\bar{T})e^{A\bar{T}}=-\tilde{W}(\bar{T})+R(0)
\end{equation}
where $\textstyle\tilde{W}(\bar{T})=\int_0^{\bar{T}} e^{A^Ts}W(s)e^{As}\d s\succeq0$. The above equality implies that $R(0)\succ\tilde{W}(\bar{T})\succeq0$ since $R(\bar{T})\succ0$. Consequently, we have that $R(\tau)\succ0$ for all $\tau\in[0,\bar{T}]$ since $\tilde{W}(\tau)\succeq0$ is a nondecreasing function, i.e. $\tilde{W}(\tau)\preceq \tilde{W}(\zeta)$ for any $0\le\tau\le\zeta\le\bar{T}$. Substituting, finally, the left-hand side of \eqref{eq:kdpsdo} in \eqref{eq:djqsodjsopd}, we get that $J^TR(0)J-R(\bar{T})=-Y+J^T\tilde{W}(\bar{T})J$. Since $W(s)$ and $Y\succ0$ are arbitrary, then we can choose them such that this then implies that $-Y+J^T\tilde{W}(\bar{T})J\prec0$ and thus that \eqref{eq:c2} holds. The proof is complete.

\textbf{Proof of (d) $\Leftrightarrow$ (e):} Assume  (d) holds for some $R(\tau)$, it is immediate to see that ${R(\tau):=S(\bar{T}-\tau)}$ solves \eqref{eq:c1b} and \eqref{eq:c2b}. Reverting the argument proves the equivalence.
\end{proof}

The conditions stated in statement (d) can be understood as a non-increase condition, over each interval $[t_k,t_{k+1})$, of the time-varying discontinuous Lyapunov function
%\begin{equation}
     $V_d(x,\tau)=x^T\hat{Q}(\tau)x$
%\end{equation}
   where $\hat{Q}(t_k+\tau)=Q(\tau)$, $Q(\tau)\in\mathbb{S}^n$, $\tau\in[0,T_k)$, $Q(0)\succ0$, $Q$ differentiable, and that verifies the boundary condition
   \begin{equation}
      J^TQ(0)J-Q^-(\bar{T})+\eps I\preceq0
   \end{equation}
   where $\textstyle{Q^-(\bar{T})=\lim_{s\uparrow\bar{T}}\{Q(s)\}}$.

A peculiarity of the proposed approach is that the matrices $R(\tau)$ and $S(\tau)$ do not need to be imposed to be positive definite over their domain of definition. Positivity over their domain is directly implied from the positivity of $R(0)$ and $S(\bar{T})$, and the LMI conditions in statements (d) and (e). These conditions, all together, indeed imply that $R(\bar{T})$ is also positive definite, and thus that, by virtue of equation \eqref{eq:sol} that $R(\tau)$ is positive definite on its domain. The case of $S(\tau)$ is symmetric.

%are not necessarily positive definite over their domain of definition, only boundary values are required to be as such. This is permitted here because the current approach is a not a continuous-time approach, but a discrete-time one. The only important thing is, therefore, the characterization of \eqref{eq:stabmono} with a positive definite matrix $P$, or equivalently positive definite matrices $R(0)$ and $S(\bar{T})$. It is also important to note that the positive definiteness of $R(0)$ and $S(\bar{T})$ are immediate implications of conditions \eqref{eq:c2} and \eqref{eq:c2b}. %Another important point is that conditions \eqref{eq:c1} and \eqref{eq:c1b} alone do not contain any information on the stability of the system since they are always feasible, independently of the system properties. The stability information is incorporated by adjoining conditions \eqref{eq:c2} and \eqref{eq:c2b}, in a similar way as for looped-functionals where the stability information is incorporated  by adding the looping-conditions (boundary constraints), see e.g. \citep{Briat:11l,Briat:12h,Briat:13b}.

There are several advantages of the conditions \eqref{eq:c1}-\eqref{eq:c2} of statement (d) (or conditions \eqref{eq:c1b}-\eqref{eq:c2b} of statement (e)) over condition \eqref{eq:stabmono} of statement (c). First of all, the conditions are convex in the system matrices $A$ and $J$, allowing then for an immediate extension to the uncertain case. Further, the presence of a single decision matrix variable in the conditions tends to suggest the possibility of deriving tractable synthesis conditions. The compensation for these interesting convexity properties is the consideration of infinite dimensional feasibility problems, which may be very hard to solve. Several methods can be applied to make the feasibility problems finite-dimensional. A first one is to discretize the interval $[0,\bar{T}]$ and express the matrix $R(\tau)$ as a piecewise linear function on each subintervals; see e.g. \cite{Allerhand:13}. A second one relies on  sum of squares programming \citep{Parrilo:00} which provides an efficient framework for solving such problems by restricting the matrix functions $R(\tau)$ and $S(\tau)$ to polynomial matrix functions. %The package SOSTOOLS \citep{sostools} together with the semidefinite programming solver SeDuMi \citep{Sturm:01a} supply the necessary material for solving such problems. The SOS program corresponding to statement (d) is expressed as follows
%\begin{equation}
%  \begin{array}{rcl}
%    \textnormal{Find\ polynomials} && R(\tau),M(\tau)\in\mathbb{S}^n\ \textnormal{and}\ \eps>0\\
%    \textnormal{such\ that}&&M(\tau)\ \textnormal{is\ SOS}\\
%    &&-\He[R(\tau)A]-\dot{R}(\tau)-M(\tau)\tau(\bar{T}-\tau)\ \textnormal{is\ SOS}\\
%    &&R(0)\succ0\\
%    &&J^TR(0)J-R(\bar{T})+\eps I\preceq0.
%  \end{array}
%\end{equation}
%%
%The role of the SOS polynomial variable $M$ is to incorporate the fact that the LMI \eqref{eq:c1} must be verified for all $\tau\in[0,\bar{T}]=\left\{s\in\mathbb{R}:s(\bar{T}-s)\ge0\right\}$. This constraint incorporation can be justified using Putinar's Positivstellensatz \cite{Putinar:93}.
It is also very important to point out that the computation complexity is improved by the fact that $R(\tau)$ does not have to be specifically imposed to be positive definite over $[0,\bar{T}]$ since this is a direct consequence of the conditions $R(0)\succ0$, \eqref{eq:c1} and \eqref{eq:c2} of the theorem.

Still in a computational perspective, it seems necessary to compare the computational complexity of the conditions of Theorem \ref{th:imp_p} to the complexity of the looped-functional-based results of \cite{Briat:12h} addressing the same problem. Assuming polynomial matrices $R(\tau),S(\tau)\in\mathbb{S}^n$ of degree $d_R$ in Theorem \ref{th:imp_p} and a polynomial matrix $Z(\tau)\in\mathbb{S}^{3n}$ of degree $d_Z$ in \citep{Briat:12h}, we have the following count of the number of variables:
\begin{equation}\label{eq:NN}
\begin{array}{lcl}
    N_{current}(d_R)    &=& (d_R+1)\dfrac{n(n+1)}{2}\\
    N_{looped}(d_Z)     &=& \dfrac{n(n+1)}{2}+(d_Z+1)\dfrac{3n(3n+1)}{2}
\end{array}
\end{equation}
for the current approach and the looped-functional approach of \cite{Briat:12h}, respectively. We can immediately see that the number of variables for the looped-functional approach grows much faster with the system dimension $n$ and the degree of the polynomial than with the proposed approach. It seems, however, important to stress that the expressions \eqref{eq:NN} should be understood as lower bounds on the actual computational complexity since additional variables are usually needed, e.g. to incorporate constraints. It will be illustrated in the examples that the proposed approach is able to obtain results that are very close to those obtained using looped-functionals with a much lower computational complexity, even if $d_R$ is usually larger than $d_Z$. A comparison will also be made with a discretization-based approach.

%It is indeed immediate from \eqref{eq:NN} to show that $N_{current}(d_R)>N_{looped}(d_Z)$ if and only if
%\begin{equation}
%  d_R>-\dfrac{1}{2}+(d_Z+1)\dfrac{3(3n+1)}{n+1}.%\nearrow9(d_Z+1)\ \textnormal{as\ }n\to\infty.
%\end{equation}

%\textbf{KEEP ???}
%\begin{remark}
%  When the periodic impulsive system \eqref{eq:impsyst} has uncertain period, e.g. $\bar{T}\in[T_{min},T_{max}$, it may be interesting to check stability over the whole range in a single procedure. In this case, the matrix $P$ should be made $\bar{T}$-dependent in condition \eqref{eq:stabmono}. This is simply done by substituting $P$ by $P(\bar{T})$. Conditions \eqref{eq:c1} and \eqref{eq:c2} are also adapted by substituting $Q(\tau)$ by $Q(\tau,\bar{T})$.
%\end{remark}

\subsection{Aperiodic impulsive systems}

Let us consider now that the system~\eqref{eq:impsyst} is aperiodic, i.e. impulses arrive at irregular times. To this aim, we consider a ranged-dwell time constraint on the sequence of impulse instants, i.e. $T_k\in[T_{min},T_{max}]$. We then have the following generalization of Theorem \ref{th:imp_p}:
\begin{theorem}[Ranged dwell-time]\label{th:imp_a}
 Let us consider the system \eqref{eq:impsyst} with a ranged dwell-time constraint, i.e. $T_k\in[T_{min},T_{max}]$, $k\in\mathbb{N}$. Then, the following statements are equivalent:
 \begin{enumerate}
    \item There exists a matrix $P\in\mathbb{S}_{\succ0}^n$ such that the LMI
    \begin{equation}\label{eq:AIb}
       J^Te^{A^T\theta}Pe^{A\theta}J-P\prec0
    \end{equation}
    holds for all $\theta\in[T_{min},T_{max}]$.
   \item There exist a differentiable matrix function $R:[0,T_{max}]\mapsto\mathbb{S}^n$, $R(0)\succ0$, and a scalar $\eps>0$ such that the LMIs
  \begin{equation}\label{eq:AIa1}
    A^TR(\tau)+R(\tau)A+\dot{R}(\tau)\preceq0
  \end{equation}
  and
  \begin{equation}\label{eq:AIa2}
    J^TR(0)J-R(\theta)+\eps I\preceq0
  \end{equation}
%  hold for all $\tau\in[0,\theta]$ and all $\theta\in[T_{min},T_{max}]$.
hold for all $\tau\in[0,T_{max}]$ and all $\theta\in[T_{min},T_{max}]$.
 \end{enumerate}
 Moreover, when one of the above statements holds, then the aperiodic impulsive system~\eqref{eq:impsyst} with ranged dwell-time $T_k\in[T_{min},T_{max}]$ is asymptotically stable.
\end{theorem}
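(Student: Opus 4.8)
The plan is to prove the equivalence (a) $\Leftrightarrow$ (b) by reproducing, mutatis mutandis, the mechanism of Theorem~\ref{th:imp_p}, and then to obtain asymptotic stability directly from statement (a). The direction (b) $\Rightarrow$ (a) I expect to be clean. First I would integrate \eqref{eq:AIa1}: since $\frac{d}{d\tau}\bigl(e^{A^T\tau}R(\tau)e^{A\tau}\bigr)=e^{A^T\tau}\bigl(A^TR(\tau)+R(\tau)A+\dot R(\tau)\bigr)e^{A\tau}\preceq0$, the map $\tau\mapsto e^{A^T\tau}R(\tau)e^{A\tau}$ is non-increasing, so $e^{A^T\theta}R(\theta)e^{A\theta}\preceq R(0)$ for all $\theta\in[0,T_{max}]$. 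Combining with \eqref{eq:AIa2}, i.e. $J^TR(0)J+\eps I\preceq R(\theta)$, and using $e^{A^T\theta}e^{A\theta}\succeq cI$ on the compact interval, yields $e^{A^T\theta}J^TR(0)Je^{A\theta}\preceq R(0)-\eta I$ on $[T_{min},T_{max}]$ for some $\eta>0$; this says $R(0)$ is a common Lyapunov matrix for the \emph{post-jump} maps $Je^{A\theta}$. To reach the pre-jump form \eqref{eq:AIb}, I would use the identity $(e^{A\theta}J)^T(J^TR(0)J)(e^{A\theta}J)=J^T\bigl[(Je^{A\theta})^TR(0)(Je^{A\theta})\bigr]J$ and set $P:=J^TR(0)J+\delta I$; then $(e^{A\theta}J)^TP(e^{A\theta}J)-P\preceq(\delta C-\eta)J^TJ-\delta I\prec0$ for $\delta<\eta/C$, with $C=\max_\theta\|e^{A\theta}\|^2$. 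Since $P\succ0$ for \emph{any} $J$, this direction goes through even when $J$ is singular.

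For (a) $\Rightarrow$ (b) I would run the construction in reverse. Taking $R(\tau):=e^{-A^T\tau}\hat P e^{-A\tau}$ makes \eqref{eq:AIa1} hold with equality and gives $R(0)=\hat P\succ0$, so that \eqref{eq:AIa2} collapses to $e^{A^T\theta}J^T\hat PJe^{A\theta}\prec\hat P$, namely $\hat P$ must be a common Lyapunov matrix for the post-jump maps $Je^{A\theta}$. Moreover one checks that using the general solution of \eqref{eq:AIa1} with a non-zero right-hand side only \emph{decreases} $R(\theta)$, so it cannot help; thus the whole direction reduces to manufacturing a post-jump $\hat P$ from the pre-jump $P$ of statement (a). When $J$ is invertible this is immediate via the fixed congruence $\hat P=J^{-T}PJ^{-1}$, because $Je^{A\theta}=J(e^{A\theta}J)J^{-1}$ and conjugation by a fixed matrix preserves existence of a quadratic Lyapunov function.

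The hard part will be the singular-$J$ instance of this last step. The point is an asymmetry: congruence by the fixed matrix $J$ turns post-jump inequalities into pre-jump ones (this is exactly what powers (b)$\Rightarrow$(a)), but the reverse passage requires conjugation by the \emph{$\theta$-dependent} factor $e^{A\theta}$, and sandwiching \eqref{eq:AIb} only produces the $\theta$-indexed certificates $\hat P_\theta=e^{A^T\theta}Pe^{A\theta}$. Collapsing this family into a single $\theta$-independent $\hat P$ is the crux. I would attack it either by regularizing to an invertible $J_\rho$, transporting $\hat P_\rho=J_\rho^{-T}PJ_\rho^{-1}$, and controlling the limit $\rho\to0$ on the reachable subspace $\operatorname{Im}J$ on which the maps actually act, or, more robustly, by proving feasibility of the system $\{\hat P-e^{A^T\theta}J^T\hat PJe^{A\theta}\succeq0:\theta\in[T_{min},T_{max}]\}$, which is linear in $\hat P$, exploiting compactness of the interval together with the one-parameter-group structure of $e^{A\theta}$.

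Finally, for the stability claim I would argue from statement (a), which may be assumed by the equivalence just proved. Writing $z_k:=x^-(t_k)$, the pre-jump samples satisfy $z_{k+1}=e^{AT_k}Jz_k$ with $T_k\in[T_{min},T_{max}]$; by continuity of $\theta\mapsto J^Te^{A^T\theta}Pe^{A\theta}J-P$ and compactness of the interval, \eqref{eq:AIb} furnishes a uniform $\delta>0$ with $(e^{A\theta}J)^TP(e^{A\theta}J)\preceq P-\delta I$. Hence $V(z_k)=z_k^TPz_k$ decays geometrically, so $z_k\to0$, and since $x(t)=e^{A(t-t_k)}Jz_k$ on $[t_k,t_{k+1})$ with $\|e^{A\cdot}\|$ bounded on $[0,T_{max}]$, the whole trajectory tends to zero, which is the asserted asymptotic stability.
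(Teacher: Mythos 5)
Your proof of (b) $\Rightarrow$ (a) and of the final stability claim is complete and correct, and it is in fact more careful than the paper's own treatment: the paper dispatches Theorem~\ref{th:imp_a} with the single sentence that the proof ``follows the same lines'' as Theorem~\ref{th:imp_p}, yet a literal transcription of that argument only yields the $\theta$-dependent certificate $J^Te^{A^T\theta}R(\theta)e^{A\theta}J\preceq R(\theta)-\varepsilon I$, and your choice $P:=J^TR(0)J+\delta I$ (with the $\delta I$ making $P\succ0$ and absorbing a possibly singular $J$) is precisely the extra step needed to collapse this into the constant $P$ demanded by \eqref{eq:AIb}. The genuine gap is in (a) $\Rightarrow$ (b). You correctly reduce it to manufacturing a single $Q\succ0$ with $e^{A^T\theta}J^TQJe^{A\theta}\prec Q$ on $[T_{min},T_{max}]$ (a common certificate for the post-jump maps $Je^{A\theta}$), and you correctly note that a nonzero right-hand side in \eqref{eq:AIa1} cannot help; but for singular $J$ you merely name two lines of attack and execute neither. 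The second attack is not viable as stated: feasibility of the LMI family in $\hat P$ cannot follow from compactness and the group structure of $e^{A\theta}$ alone, because uniform exponential stability of a matrix family does not imply existence of a common quadratic certificate; the specific algebraic relation between the pre- and post-jump families must be exploited.

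The missing step can in fact be closed in closed form, essentially completing your regularization route. Use the standard inversion equivalence: for $X\succ0$ and any (possibly singular) $M$, $M^TXM\prec X$ if and only if $MX^{-1}M^T\prec X^{-1}$ (two Schur complements on the block matrix with diagonal $X,X$ and off-diagonal $M^TX$). Applying this to \eqref{eq:AIb} and invoking compactness gives $e^{A\theta}JP^{-1}J^Te^{A^T\theta}\preceq P^{-1}-\eta_1 I$ for all $\theta\in[T_{min},T_{max}]$ and some $\eta_1>0$. Now run your own $+\delta I$ trick on this transposed side: with $C:=\max_{\theta\in[0,T_{max}]}\|e^{A\theta}\|^2$ and $0<\delta\le\eta_1/C$, the matrix $P_1:=JP^{-1}J^T+\delta I\succ0$ satisfies
\begin{equation*}
Je^{A\theta}P_1e^{A^T\theta}J^T\preceq JP^{-1}J^T+(\delta C-\eta_1)JJ^T\preceq P_1-\delta I.
\end{equation*}
Inverting once more, $Q:=P_1^{-1}=(JP^{-1}J^T+\delta I)^{-1}$ obeys $e^{A^T\theta}J^TQJe^{A\theta}\preceq Q-\eta_2 I$ uniformly, and then $R(\tau):=e^{-A^T\tau}Qe^{-A\tau}$ satisfies \eqref{eq:AIa1} with equality, $R(0)=Q\succ0$, and \eqref{eq:AIa2} for any $0<\varepsilon\le\eta_2/C$. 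For invertible $J$ this $Q$ is exactly a regularization of $J^{-T}PJ^{-1}=(JP^{-1}J^T)^{-1}$, so your first strategy does converge to the right object; as submitted, however, the direction (a) $\Rightarrow$ (b) is not established, and without it neither is the theorem's equivalence.
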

\begin{proof}
  The proof follows the same lines as the one of Theorem \ref{th:imp_p}
\end{proof}
It is important to stress that, in the result above, the computational complexity of the second statement is much lower than if we had used conditions \eqref{eq:c1}-\eqref{eq:c2} of Theorem \ref{th:imp_p}, statement (e). This follows from the fact that in statement (e), we would have required $S(\theta)\succ0$ for all $\theta\in[0,T_{max}]$, which is obviously much more complex than simply imposing $R(0)\succ0$ in the present case. To pursue on the computational complexity analysis, we note that the conditions of Theorem \ref{th:imp_a} are more expensive than those of Theorem \ref{th:imp_p} due to the presence of the additional parameter $\theta\in[0,T_{max}]$ in LMI \eqref{eq:AIa2}. %The number of variables for this problem is given by
%\begin{equation}\label{eq:NN2}
%  N_{current}(d_R)=3(d_R+1)\dfrac{n(n+1)}{2}
%\end{equation}
%where the factor is now equal to 3 since one more SOS variable has to be considered in order to capture the fact that LMI \eqref{eq:AIa2} must be verified for all $\theta\in[T_{min},T_{max}]$. In the case of looped-functional the computational complexity is way higher since the matrix $Z$ must now be defined as a multivariate polynomial in $\tau$ and $\theta$. %Hence, if the degree of $Z$ is $d_Z$, the number of variables is given by
%  \begin{equation}
%    N_{looped}(d_Z)=\dfrac{n(n+1)}{2}+(d_Z+1)(d_Z+2)\dfrac{3n(3n+1)}{2}
%  \end{equation}

  %The number of SOS matrix variables is still 2 but they are also multivariate polynomials and thus involve more decision variables.

%\begin{remark}
%  As in the periodic case, the above result can be extended to incorporate matrices that depend on $\theta$. In such a case, condition \eqref{eq:stabmono} has to be simply changed to
%  \begin{equation}\label{eq:stabmono_ap}
%         e^{A^T\theta}J^TP(\theta) e^{A\theta}J^T-P(\eta)\prec0
%\end{equation}
%where  $\theta,\eta\in[T_{min},T_{max}]$.
%
%\textbf{AND FOR THE OTHERS ??}
%\end{remark}

The next result concerns stability of impulsive system under minimum dwell-time, i.e. $T_k\ge\bar{T}$ for all $k\in\mathbb{N}$. This stability concept has been extensively studied in the past, see. e.g. \citep{Hespanha:08,Briat:11l,Briat:12h} and references therein.
\begin{theorem}[Minimum Dwell-Time]\label{th:imp_dt}
  Let us consider the system \eqref{eq:impsyst} with a minimum dwell-time constraint, i.e. $T_k\ge\bar{T}$, $k\in\mathbb{N}$. Then, the following statements are equivalent:
  \begin{enumerate}
%    \item There exists a matrix $P\in\mathbb{S}_{\succ0}^n$ such that the LMIs
%    \begin{equation}\label{eq:minDT00}
%      A^TP+PA\prec0
%    \end{equation}
%    and
%    \begin{equation}\label{eq:minDT01}
%          J^Te^{A^T\theta}Pe^{A\theta}J-P\prec0
%    \end{equation}
%    hold for all $\theta\in[\bar{T},\infty)$.
  %
    \item There exists a matrix $P\in\mathbb{S}_{\succ0}^n$ such that the LMIs
    \begin{equation}\label{eq:minDTa1}
      A^TP+PA\prec0
    \end{equation}
    and
    \begin{equation}\label{eq:minDTa2}
          J^Te^{A^T\bar{T}}Pe^{A\bar{T}}J-P\prec0
    \end{equation}
    hold.
    \item There exist a differentiable matrix function $R:[0,\bar{T}]\mapsto\mathbb{S}^n$, $R(0)\succ0$, and a scalar $\eps>0$ such that the LMIs
\begin{equation}\label{eq:minDTb1}
      A^TR(0)+R(0)A\prec0
    \end{equation}
  \begin{equation}\label{eq:minDTb2}
    A^TR(\tau)+R(\tau)A+\dot{R}(\tau)\preceq0
  \end{equation}
  and
  \begin{equation}\label{eq:minDTb3}
    J^TR(0)J-R(\bar{T})+\eps I\preceq0
  \end{equation}
 hold for all $\tau\in[0,\bar{T}]$.
     \item There exist a differentiable matrix function $S:[0,\bar{T}]\mapsto\mathbb{S}^n$, $S(\bar{T})\succ0$, and a scalar $\eps>0$ such that the LMIs
\begin{equation}\label{eq:minDTc1}
      A^TS(\bar{T})+S(\bar{T})A\prec0
    \end{equation}
  \begin{equation}\label{eq:minDTc2}
    A^TS(\tau)+S(\tau)A-\dot{S}(\tau)\preceq0
  \end{equation}
  and
  \begin{equation}\label{eq:minDTc3}
    J^TS(\bar{T})J-S(0)+\eps I\preceq0
  \end{equation}
 hold for all $\tau\in[0,\bar{T}]$.
  \end{enumerate}
  Moreover, when one of the above statements holds, the impulsive system~\eqref{eq:impsyst} is asymptotically stable under minimum dwell-time $\bar{T}$, i.e. for any sequence $\{t_{k}\}_{k\in\mathbb{N}}$ such that $T_k\ge\bar{T}$.
\end{theorem}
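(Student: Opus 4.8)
The plan is to prove the chain of equivalences (a) $\Leftrightarrow$ (b) $\Leftrightarrow$ (c) and then deduce asymptotic stability from statement (a), following the blueprint of Theorem \ref{th:imp_p}. The equivalence (b) $\Leftrightarrow$ (c) is the cheapest part: substituting $R(\tau):=S(\bar{T}-\tau)$ maps \eqref{eq:minDTc1}--\eqref{eq:minDTc3} onto \eqref{eq:minDTb1}--\eqref{eq:minDTb3} term by term (in particular $R(0)=S(\bar{T})$ and $R(\bar{T})=S(0)$, while $\dot{R}(\tau)=-\dot{S}(\bar{T}-\tau)$ flips the sign in the flow inequality), exactly as in the proof of (d) $\Leftrightarrow$ (e) there. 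For (a) $\Rightarrow$ (b) I would proceed constructively and exhibit $R(\tau):=e^{A^T(\bar{T}-\tau)}Pe^{A(\bar{T}-\tau)}$. This choice makes $A^TR(\tau)+R(\tau)A+\dot{R}(\tau)=0$, so \eqref{eq:minDTb2} holds with equality; it gives $R(0)=e^{A^T\bar{T}}Pe^{A\bar{T}}\succ0$ and, because $A$ commutes with $e^{A\bar{T}}$, $A^TR(0)+R(0)A=e^{A^T\bar{T}}(A^TP+PA)e^{A\bar{T}}\prec0$, which is \eqref{eq:minDTb1}; finally $R(\bar{T})=P$, so \eqref{eq:minDTb3} reduces to $J^Te^{A^T\bar{T}}Pe^{A\bar{T}}J-P+\eps I\preceq0$, which holds for $\eps>0$ small by \eqref{eq:minDTa2}.

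The converse (b) $\Rightarrow$ (a) is where the real work lies, and it is the step that departs from the periodic and ranged cases. Integrating \eqref{eq:minDTb2} over $[0,\bar{T}]$ shows that $\tau\mapsto e^{A^T\tau}R(\tau)e^{A\tau}$ is non-increasing, whence $e^{A^T\bar{T}}R(\bar{T})e^{A\bar{T}}\preceq R(0)$; combined with \eqref{eq:minDTb3}, i.e. $J^TR(0)J+\eps I\preceq R(\bar{T})$, congruence by $e^{A\bar{T}}$ yields the \emph{flow-then-jump} contraction $e^{A^T\bar{T}}J^TR(0)Je^{A\bar{T}}\prec R(0)$. The subtlety is that statement (a) asks for a \emph{single} $P\succ0$ satisfying both the flow inequality \eqref{eq:minDTa1} and the \emph{jump-then-flow} discrete inequality \eqref{eq:minDTa2}, and neither boundary value of $R$ serves: $P=R(\bar{T})$ satisfies \eqref{eq:minDTa2} directly (chain $J^Te^{A^T\bar{T}}R(\bar{T})e^{A\bar{T}}J\preceq J^TR(0)J\preceq R(\bar{T})-\eps I$) but not \eqref{eq:minDTa1}, while $P=R(0)$ satisfies \eqref{eq:minDTa1} via \eqref{eq:minDTb1} but only the flow-then-jump form of the discrete condition. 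The reconciliation I would use is the conjugated certificate $P:=e^{-A^T\bar{T}}R(0)e^{-A\bar{T}}\succ0$: commuting $A$ through the exponentials gives $A^TP+PA=e^{-A^T\bar{T}}(A^TR(0)+R(0)A)e^{-A\bar{T}}\prec0$, so \eqref{eq:minDTa1} holds, while $J^Te^{A^T\bar{T}}Pe^{A\bar{T}}J=J^TR(0)J$, so \eqref{eq:minDTa2} is precisely the flow-then-jump contraction derived above. This single $P$ closes the equivalence.

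Finally, for the stability claim I would run the standard minimum-dwell-time Lyapunov argument on $V(x)=x^TPx$ with the $P$ of statement (a). Along the flow, $A^TP+PA\prec0$ makes $t\mapsto x(t)^TPx(t)$ strictly decreasing, so $V(e^{As}z)\le V(z)$ for every $z$ and every $s\ge0$; evaluating $V$ at the pre-jump instants and writing $x(t_{k+1}^-)=e^{A(T_k-\bar{T})}e^{A\bar{T}}Jx(t_k^-)$ with $T_k\ge\bar{T}$, the monodromy inequality \eqref{eq:minDTa2} gives $V(e^{A\bar{T}}Jx(t_k^-))<V(x(t_k^-))$ and the extra flow of length $T_k-\bar{T}\ge0$ only decreases $V$ further. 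Hence the sequence $\{V(x(t_k^-))\}$ is strictly decreasing, and since on each interval $V$ is bounded by its post-jump value $V(Jx(t_k^-))$, the whole trajectory is bounded and converges to the origin.

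The step I expect to be the main obstacle is exactly the (b) $\Rightarrow$ (a) reconciliation: because the minimum dwell-time permits arbitrarily long flows, statement (a) must certify decay with a \emph{constant} quadratic form, and the difficulty is that the flow condition and the (jump-then-flow ordered) discrete condition naturally attach to different matrices extracted from $R$. Recognising that the correct common certificate is the backward-propagated matrix $e^{-A^T\bar{T}}R(0)e^{-A\bar{T}}$ (equivalently, that the two orderings of the discrete contraction are related by congruence with $e^{A\bar{T}}$, which commutes with $A$ and hence preserves \eqref{eq:minDTa1}) is the key observation; everything else mirrors Theorem \ref{th:imp_p}.
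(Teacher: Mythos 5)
Your proposal is correct, and it is genuinely more self-contained than the paper's treatment, which disposes of this theorem in two lines: the equivalence (a) $\Leftrightarrow$ (b) $\Leftrightarrow$ (c) is said to ``follow from Theorem \ref{th:imp_p}'', and the implication from (a) to minimum dwell-time stability is delegated to \citep{Briat:11l,Briat:12h}. Your (a) $\Rightarrow$ (b) construction $R(\tau)=e^{A^T(\bar{T}-\tau)}Pe^{A(\bar{T}-\tau)}$ is precisely the $W\equiv0$ specialization of the variational family \eqref{eq:sol} built in the proof of Theorem \ref{th:imp_p}, so that direction is a streamlining rather than a departure; likewise your (b) $\Leftrightarrow$ (c) time reversal is the paper's (d) $\Leftrightarrow$ (e) argument verbatim. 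The genuinely new content is your (b) $\Rightarrow$ (a) step. Invoking Theorem \ref{th:imp_p} alone produces $P=R(\bar{T})$ certifying the jump-then-flow inequality \eqref{eq:minDTa2}, and---as you correctly observe---nothing forces that matrix to satisfy the added flow LMI \eqref{eq:minDTa1}, while $R(0)$ satisfies \eqref{eq:minDTb1} but certifies only the flow-then-jump contraction. Your conjugated certificate $P=e^{-A^T\bar{T}}R(0)e^{-A\bar{T}}$, which uses the commutation of $A$ with its own exponential to carry \eqref{eq:minDTb1} into \eqref{eq:minDTa1} while collapsing $J^Te^{A^T\bar{T}}Pe^{A\bar{T}}J$ to $J^TR(0)J$, supplies exactly the reconciliation the paper leaves implicit: it is the observation that Lyapunov matrices for the two orderings $Je^{A\bar{T}}$ and $e^{A\bar{T}}J$ are exchanged by congruence with $e^{A\bar{T}}$, a congruence that happens to preserve the continuous-time LMI. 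This buys a single argument where the paper relies on external references, at no loss of generality. Your closing Lyapunov argument (decrease at the pre-jump instants, factorization of the flow over $[\,\bar{T},T_k)$, boundedness between impulses) matches the structure the paper uses for the stabilization counterpart in Theorem \ref{th:imp_dt_stabz}.

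One small tightening is needed in that last step: a strictly decreasing positive sequence need not converge to zero, so ``$\{V(x^-(t_k))\}$ is strictly decreasing'' does not by itself give convergence to the origin. Since \eqref{eq:minDTa2} is a strict LMI in fixed matrices, extract once and for all a $\rho\in(0,1)$ such that $(e^{A\bar{T}}J)^TP(e^{A\bar{T}}J)\preceq\rho P$; combined with the monotonicity of $V$ along flows of arbitrary length this yields $V(x^-(t_{k+1}))\le\rho\, V(x^-(t_k))$, hence geometric decay, and the bound $V(Jz)\le c\,V(z)$ you already invoke then transfers this decay to the whole trajectory. (The scenario in which impulses stop does not arise under the paper's standing assumption that $\{t_k\}$ is infinite and unbounded, and \eqref{eq:minDTa1} would cover it in any case.)
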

\begin{proof}
The proof that  $(a)\Leftrightarrow(b)\Leftrightarrow(c)$ follows from Theorem \ref{th:imp_p}. The proof that $(a)$ implies stability with minimum dwell-time can be found in \citep{Briat:11l,Briat:12h}.
%
% \textbf{Proof of $(b)\Leftrightarrow(c)\Leftrightarrow(d)$:} It follows from Theorem \ref{th:imp_p}.
%
%  \textbf{Proof of $(a)\Rightarrow(b)$:} Immediate.
%
%\textbf{Proof of $(b)\Rightarrow(a)$:}  This has been proved in \citep{Briat:11l,Briat:12h}.
%%
%, this is recalled for completeness. The key idea is to prove that \eqref{eq:minDTa1} and \eqref{eq:minDTa2} together imply that \eqref{eq:minDT01} holds for all $\tau\in[\bar{T},\infty)$. Let $\mathcal{L}(\theta):=e^{A^T\theta}Pe^{A\theta}$, then \eqref{eq:minDT01} holds for all $\theta\in[\bar{T},\infty)$ if and only if
%%\begin{equation}
%  $J^T\mathcal{L}(\theta)J-P\prec0$
%%\end{equation}
%holds for all $\theta\in[\bar{T},\infty)$. A Taylor expansion around $\theta=\theta_0\ge\bar{T}$ yields
%%\begin{equation}
%%  \mathcal{L}(\theta_0+\delta)=\mathcal{L}(\theta_0)+\delta e^{A^T\theta_0}[A^TP+PA]e^{A\theta_0}+o(\delta)
%%\end{equation}
%%and thus
%\begin{equation}
%  \mathcal{L}(\theta_0+\delta)-\mathcal{L}(\theta_0)=\delta e^{A^T\theta_0}[A^TP+PA]e^{A\theta_0}+o(\delta).
%\end{equation}
%Since \eqref{eq:minDTa1} holds, then the right-hand side is negative definite for all $\theta_0\ge\bar{T}$ and all $\delta>0$, thus we have that
%%\begin{equation}
%  $\mathcal{L}(\bar{T}+\epsilon)\preceq\mathcal{L}(\bar{T})$
%%\end{equation}
%for all $\epsilon\ge0$. Therefore, $J^T\mathcal{L}(\bar{T})J-P\prec0$ implies that $J^T\mathcal{L}(\theta)J-P\prec0$ holds for all $\theta\ge\bar{T}$, which concludes the proof.
\end{proof}

It is important to note that the above theorem straightforwardly extends to time-varying systems depending explicitly on time and/or time-varying parameters by simply using the fundamental-solution and the state-transition matrices instead of exponentials. The variational argument used to prove the equivalence between statements (a) and (b) remains also valid.

\begin{remark}\label{rk:maxDT}
  Similarly to as in \citep{Briat:11l,Briat:12h}, a maximum dwell-time result can be obtained by simply reverting the inequality sign in the LMIs  \eqref{eq:minDTa1}, \eqref{eq:minDTb1}  and \eqref{eq:minDTc1}. In such a case, the concluding statement changes to: "The aperiodic impulsive system~\eqref{eq:impsyst} is asymptotically stable under maximum dwell-time $\bar{T}$, i.e. for any sequence $\{t_{k}\}_{k\in\mathbb{N}}$ such that $T_k\in[\epsilon,\bar{T}]$ for any $\epsilon>0$."
\end{remark}

\subsection{Examples}

The conditions stated in Theorems \ref{th:imp_p}, \ref{th:imp_a} and \ref{th:imp_dt} are infinite-dimensional feasibility problems. In order to enforce them efficiently, the sum-of-squares programming package SOSTOOLS \citep{sostools} and the semidefinite programming solver SeDuMi \citep{Sturm:01a} are used. Suitable matrix functions $R$ or $S$ such that the conditions of Theorems \ref{th:imp_p}, \ref{th:imp_a} and \ref{th:imp_dt} are feasible are then searched within the set of matrix polynomials of fixed (and chosen) degree, $d_R$ say. In this case, the matrix function $R(\tau)$ is chosen as $\textstyle R(\tau)=\sum_{i=0}^{d_R}R_i\tau^i$, $R_i\in\mathbb{S}^n$, and, in this regard, its derivative is simply given by the polynomial $\textstyle\dot{R}(\tau)=\sum_{i=1}^{d_R}iR_i\tau^{i-1}$ which can be easily inserted in the SOS conditions. In the examples below, the number of variables is identified as the number of variables declared by SOSTOOLS when defining the matrix decision variables, i.e. the Lyapunov matrix $P(\tau)$ and the SOS variables $M_i(\tau)$'s for constraints incorporation. Simulations are performed on an i7-2620M @ 2.70 Ghz with 4GB of RAM.

Note that even though the results obtained in the following examples are compared with the results of \citep{Briat:11l,Briat:12h}, other methods such as the one described in \citep{Dashkovskiy:13} can be applied as well.

% UPDATED
\begin{example}[Ranged dwell-time]\label{ex:ranged}
    Let us consider the system~(\ref{eq:impsyst}) with matrices \citep{Briat:11l,Briat:12h}
  \begin{equation}
  \begin{array}{lclclcl}
        A&=&\begin{bmatrix}
       -1 & 0.1\\
       0 & 1.2
    \end{bmatrix},&& J&=&\begin{bmatrix}
      1.2 &0\\
      0 &0.5
    \end{bmatrix}.
  \end{array}
  \end{equation}
  By computing the eigenvalues of $e^{A\bar{T}}J$, this system can be easily shown to be stable with $\bar{T}$-periodic impulses whenever $\bar{T}\in[0.1824, 0.5776]$. Using the ranged dwell-time stability conditions \eqref{eq:AIa1}-\eqref{eq:AIa2} of Theorem \ref{th:imp_a}, the same bounds are retrieved with a matrix polynomial $R$ of order 6, showing then tightness of the obtained numerical values in the aperiodic case. For comparison purposes, the same numerical result is obtained in \cite{Briat:12h} using a looped-functional of degree $d_Z=3$. %\rouge{SOSTOOLS, however, declares 545 variables and 109 constraints for the current approach, whereas for looped-functionals $6406$ variables and 628 constraints are involved. The execution time, on an Intel i7, 2.70 Ghz, 4GB of RAM, including pre- and post-processing is about 1 second whereas it is approximately of 15 seconds for the looped functional approach of \citep{Briat:12h}.}
  SOSTOOLS, however, declares 149 variables for the current approach, whereas for looped-functionals $2806$ variables are involved. The execution time is about 1 second whereas it is approximately of 15 seconds for the looped functional approach of \citep{Briat:12h}.
  \end{example}
\begin{table}[ht]
\centering
\begin{tabular}{|c|c||c|c|}
    \hline
      & $d_R$& $T_{min}$ & $T_{max}$\\
      \hline
      \hline
      \multirow{3}{*}{Theorem \ref{th:imp_a}, (b)} & 2 & 0.1834 & 0.4998\\   % UPDATED
      &4 & 0.1824 & 0.5768\\
      &6 & 0.1824 & 0.5776\\
      \hline
      Periodic case & -- & 0.1824 & 0.5776\\
      \hline
    \end{tabular}
    \caption{Estimates of the admissible range of dwell-times for the aperiodic system of Example \ref{ex:ranged}}\label{tab:cranged}
\end{table}
\begin{table}[ht]
\centering
\begin{tabular}{|c|c|c|}
    \hline
      & $d_R$ & $T_{min}$\\       %% UPDATED
      \hline
      \hline
      \multirow{3}{*}{Theorem \ref{th:imp_dt}, (c)}&2 & 1.1883\\
      &4 & 1.1408\\
      &6 & 1.1406\\
      \hline
      Theorem \ref{th:imp_dt}, (b) & -- & 1.1406\\
      \hline
      Periodic case & -- & 1.1406\\
      \hline
    \end{tabular}
    \caption{Estimates of the minimum dwell-time for Example \ref{ex:min}}\label{tab:cmin}
\end{table}

% MIN DT UPDATED
\begin{example}[Minimum dwell-time]\label{ex:min}
  Let us consider the system~(\ref{eq:impsyst}) with matrices \citep{Briat:11l,Briat:12h}
  \begin{equation}
  \begin{array}{lclclcl}
        A&=&\begin{bmatrix}
   -1 &0\\
   1 &-2
    \end{bmatrix},& &J&=&\begin{bmatrix}
      2 & 1\\
      1 & 3
    \end{bmatrix}.
  \end{array}
  \end{equation}
  Since $A$ is Hurwitz, the minimum dwell-time result stated in Theorem \ref{th:imp_dt} can be applied. Using conditions \eqref{eq:minDTa1}-\eqref{eq:minDTa2}, we get the minimum dwell-time $\bar{T}=1.1405$. The same value for the minimum dwell-time is obtained using conditions  \eqref{eq:minDTb1}-\eqref{eq:minDTb2}-\eqref{eq:minDTb3} with a polynomial matrix $R$ of order 6; see Table \ref{tab:cmin}.
%The matrix $R(0)$ obtained in the last case is given by
%\begin{equation*}
%  R(0)=\begin{bmatrix}
%4.0547& -2.5155\\
%\star&  1.8501
%\end{bmatrix}.
%\end{equation*}
%\rouge{Using the looped-functional approach of \cite{Briat:12h}, this numerical result is obtained by using polynomials of degree $d_Z=3$ (i.e. 992 variables and 160 constraints, whereas the current approach involving polynomials of order 6 only requires 197 variables and 36 constraints; approximately a five-fold decrease. The execution time, on an Intel i7, 2.70 Ghz, 4GB of RAM, including pre- and post-processing is about 0.5 second whereas it is approximately of 1.5 seconds for the looped functional approach of \citep{Briat:12h}. For comparison, we also consider a discretization scheme where $R(\tau)$ is expressed as a piecewise linear function on $[0,\bar{T}]$. Let $N$ be the number of subintervals \textbf{FINISH THAT}.}
Using the looped-functional approach of \cite{Briat:12h}, this numerical result is obtained by using polynomials of degree $d_Z=3$ (i.e. 412 variables), whereas the current approach involving polynomials of order 6 only requires 85 variables. The execution time is about 0.5 second whereas it is approximately of 1.5 seconds for the looped functional approach of \citep{Briat:12h}. For comparison, we also consider a discretization scheme \cite{Allerhand:13} where $R(\tau)$ is expressed as a piecewise linear function on $[0,\bar{T}]$ which is subdivided in $N$ subintervals. For a fair comparison, we select $N=28$, which gives 87 variables, and we get 1.1919 as the computed bound on the minimum dwell-time. The computation time is 1.2 seconds. Thus we can see that, on this example, the SOS approach perform better with a comparable number of variables. Note, moreover, that the number of constraints involved in the discretization is larger than the one considered in the SOS program as well.
\end{example}

\subsection{A robustness result}

All the previous results can be robustified to account for parametric uncertainties affecting $A$ and $J$. To this aim, let us consider now that the matrices of the system~\eqref{eq:impsyst} are uncertain, possibly time-varying, and belonging to the following polytopes
\begin{equation}\label{eq:uncmat}
  A\in\mathcal{A}:=\co\left\{A_1,\ldots,A_N\right\}\ \text{and}\   J\in\mathcal{J}:=\co\left\{J_1,\ldots,J_N\right\}
\end{equation}
where $\co\{\cdot\}$ is the convex-hull operator. Before stating the main results, it is necessary to introduce the state-transition matrix $\Phi(\cdot)$, which corresponds to system~\eqref{eq:impsyst}-\eqref{eq:uncmat}, as
\begin{equation}\label{eq:evol}
      \dfrac{\d\Phi(s)}{\d s}=\left(\sum_{i=1}^N\lambda_i(s)A_i\right)\Phi(s),\ \Phi(0)=I
\end{equation}
where $\lambda(s)\in\Lambda_N:=\left\{\xi\in\mathbb{R}^N_{\ge0}:\ ||\xi||_1=1\right\}$ is sufficiently regular so that  solutions to \eqref{eq:evol} are well-defined, e.g. in a Carath\'{e}odory sense.  Associated to this transition matrix, we define the set $\boldsymbol{\Phi}_{\bar{T}}$ as
\begin{equation}
 \boldsymbol{\Phi}_{\bar{T}}:=\left\{\Phi(\bar{T}):\ \Phi(s)\ \text{solves\ \eqref{eq:evol}},\ \lambda(s)\in\Lambda_N,\ s\in[0,\bar{T}]\right\}.
\end{equation}
This set corresponds of all possible transition matrices $\Phi(\bar{T})$ obtained for all possible trajectories of the uncertain parameters $\lambda$. Note that the set $\boldsymbol{\Phi}_{\bar{T}}$ is strongly nonconvex and is difficult to compute exactly. This intricate structure illustrates the inherent difficulty in considering uncertain systems in a discrete-time setting. By reformulating the discrete-time conditions in terms of conditions \eqref{eq:c1} and \eqref{eq:c2}, this difficulty is circumvented and discrete-time stability results can be efficiently robustified. For conciseness, only the robustification of Theorem \ref{th:imp_p} will be discussed. Robust versions of Theorems \ref{th:imp_a} and \ref{th:imp_dt} can be obtained in the same way.
\begin{theorem}[Periodic impulses]\label{th:imp_p_rob}
 Let us consider the uncertain (time-varying)  impulsive system \eqref{eq:impsyst}-\eqref{eq:uncmat} with $\bar{T}$-periodic impulses, i.e. $T_k=\bar{T}$, $k\in\mathbb{N}$. Then, the following statements are equivalent:
 \begin{enumerate}
   \item The uncertain (time-varying)  impulsive system~\eqref{eq:impsyst}-\eqref{eq:uncmat} with $\bar{T}$-periodic impulses is quadratically stable\footnote{Quadratic stability of a linear uncertain system is defined here through the existence of a common quadratic Lyapunov function (i.e. independent of $\lambda$ in the present case) for the uncertain system; see e.g. \cite{Khargo:90}.}.
   \item There exists a matrix $P\in\mathbb{S}_{\succ0}^n$ such that the LMI
    \begin{equation}\label{eq:dkslddjsldjsldsj}
       J^T\Psi^TP\Psi J-P\prec0
    \end{equation}
    holds for all $(\Psi,J)\in\boldsymbol{\Phi}_{\bar{T}}\times\mathcal{J}$. Equivalently, the quadratic form $V(x)=x^TPx$ is a discrete-time Lyapunov function for the uncertain time-varying discrete-time system $z_{k+1}=\Psi_kJz_k$, for all $(\Psi_k,J)\in\boldsymbol{\Phi}_{\bar{T}}\times\mathcal{J}$.
   %
 %  \item The discontinuous Lyapunov function $V_d$ defined as
%   \begin{equation}
%     V_d(x(t),t)=x(t)^T\hat{Q}(t)x(t)
%   \end{equation}
%    with  $\hat{Q}(t_k+\tau)=Q(\tau)$, $\tau\in[0,T_k)$, $Q$ differentiable, is pointwise-decreasing over $\{t_k\}_{k\in\mathbb{N}}$ and along the trajectories of the system~\eqref{eq:impsyst}-\eqref{eq:uncmat}, i.e. $V_d(x(t_{k+1}),t_{k+1})-V_d(x(t_k),t_{k})$ negative definite for all $k\in\mathbb{N}$.
%    %
   \item There exist a differentiable matrix function $R:[0,\bar{T}]\mapsto\mathbb{S}^n$, $R(0)\succ0$, and a scalar $\eps>0$ such that the LMIs
  \begin{equation}
    A_i^TR(\tau)+R(\tau)A_i+\dot{R}(\tau)\preceq0
  \end{equation}
  and
  \begin{equation}
    J_i^TR(0)J_i-R(\bar{T})+\eps I\preceq0
  \end{equation}
  hold for all $\tau\in[0,\bar{T}]$ and all $i=1,\ldots,N$.
  \item There exist a differentiable matrix function $S:[0,\bar{T}]\mapsto\mathbb{S}^n$, $S(\bar{T})\succ0$, and a scalar $\eps>0$ such that the LMIs
  \begin{equation}
    A_i^TS(\tau)+S(\tau)A_i-\dot{S}(\tau)\preceq0
  \end{equation}
  and
  \begin{equation}
    J_i^TS(\bar{T})J_i-S(0)+\eps I\preceq0
  \end{equation}
  hold for all $\tau\in[0,\bar{T}]$ and all $i=1,\ldots,N$.
 \end{enumerate}
\end{theorem}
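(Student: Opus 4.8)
The plan is to transpose the argument of Theorem \ref{th:imp_p} to the polytopic setting, the only genuinely new ingredient being the passage from the vertices $A_i,J_i$ to the whole uncertainty sets $\mathcal{A},\mathcal{J}$. First I would dispose of $(a)\Leftrightarrow(b)$: sampling the trajectories just before each impulse shows that one period of the impulsive system \eqref{eq:impsyst}--\eqref{eq:uncmat} realizes the map $z_{k+1}=\Psi_kJ_kz_k$ with $(\Psi_k,J_k)\in\boldsymbol{\Phi}_{\bar{T}}\times\mathcal{J}$ arbitrary and independent across $k$, $\Psi_k$ being built from \eqref{eq:evol}; quadratic stability is by definition the existence of a common $P\succ0$ rendering $V(z)=z^TPz$ a Lyapunov function for this discrete-time inclusion, which is exactly \eqref{eq:dkslddjsldjsldsj}. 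I would also record that, by compactness of $\boldsymbol{\Phi}_{\bar{T}}\times\mathcal{J}$ and continuity, the strict inequality in $(b)$ holds with a \emph{uniform} margin, i.e. $J^T\Psi^TP\Psi J\preceq P-\delta I$ for some $\delta>0$ and all $(\Psi,J)$.

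Next I would prove $(c)\Rightarrow(b)$. Since the flow inequality is affine in the $A_i$ and $\sum_i\lambda_i=1$, multiplying the $i$-th inequality by $\lambda_i(s)$ and summing yields $A(s)^TR(s)+R(s)A(s)+\dot{R}(s)\preceq0$ for every measurable $\lambda(s)\in\Lambda_N$, with $A(s)=\sum_i\lambda_i(s)A_i$. Hence $g(s):=\Phi(s)^TR(s)\Phi(s)$ is non-increasing along every trajectory of \eqref{eq:evol}, so $\Psi^TR(\bar{T})\Psi\preceq R(0)$ for all $\Psi\in\boldsymbol{\Phi}_{\bar{T}}$. Pre- and post-multiplying by $J^T,J$ and invoking the matrix-convexity of $J\mapsto J^TR(0)J$ (valid because $R(0)\succ0$) to lift the vertex boundary conditions to all $J\in\mathcal{J}$, I get $J^T\Psi^TR(\bar{T})\Psi J\preceq J^TR(0)J\preceq R(\bar{T})-\eps I$; the boundary LMIs moreover force $R(\bar{T})\succeq\eps I\succ0$, so \eqref{eq:dkslddjsldjsldsj} holds with $P=R(\bar{T})$. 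The equivalence $(c)\Leftrightarrow(d)$ is then immediate from the substitution $R(\tau):=S(\bar{T}-\tau)$, exactly as in Theorem \ref{th:imp_p}, and it also settles $(d)\Rightarrow(b)$.

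The hard part is $(b)\Rightarrow(c)$: synthesizing a \emph{single} time-varying $R(\tau)$ that certifies all vertices simultaneously. I would mirror the two-step construction of $(c)\Rightarrow(d)$ in Theorem \ref{th:imp_p}: first argue that the flow inequalities can be met without loss of generality while prescribing the terminal value $R(\bar{T})=P$, then close the boundary inequality using the uniform margin $\delta$. The obstacle specific to the uncertain case is that the nominal construction \eqref{eq:sol}, $R(\tau)=e^{-A^T\tau}R(0)e^{-A\tau}-\int_0^\tau e^{-A^T(\tau-s)}W(s)e^{-A(\tau-s)}\d s$, transports $P$ along one flow only, whereas here $A_i^TR(\tau)+R(\tau)A_i+\dot{R}(\tau)\preceq0$ is required for all $i$ at once. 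Equivalently, $R(0)$ must dominate $\Psi^TR(\bar{T})\Psi$ for \emph{every} $\Psi\in\boldsymbol{\Phi}_{\bar{T}}$, while still satisfying the boundary $J_i^TR(0)J_i\preceq R(\bar{T})-\eps I$; and the pointwise Loewner supremum of the family $\{\Psi^TP\Psi\}$ is not attained by any single matrix.

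This is precisely where the hypothesis is that $P$ is a \emph{common} (quadratic) certificate rather than a trajectory-wise one, and where I expect the real work to lie. The plan is to use the strict margin $\delta$ to absorb the gap between the genuine worst-case (non-quadratic) value function and the quadratic over-approximation $x^TR(\tau)x$: set $R(\bar{T})=P$ and seek $R(0)$ as a dominating matrix for $\{\Psi^TP\Psi:\Psi\in\boldsymbol{\Phi}_{\bar{T}}\}$ that still verifies $J_i^TR(0)J_i\preceq P-\eps I$, the feasibility of this sandwich being the decisive point to establish from $\delta>0$, after which a flow-monotone interpolation $R(\tau)$ is filled in. Once $R(\bar{T})\succ0$ is secured, positivity of $R(\tau)$ on $[0,\bar{T}]$ follows from the flow inequality as in the nominal analysis of \eqref{eq:sol}, and $(c)\Leftrightarrow(d)$ closes the chain of equivalences.
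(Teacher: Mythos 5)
Your handling of $(a)\Leftrightarrow(b)$, $(c)\Rightarrow(b)$ and $(c)\Leftrightarrow(d)$ is correct, and it is in substance everything the paper itself records: the paper disposes of this theorem with the single remark that it ``follows from some convexity arguments,'' which amounts precisely to the vertex-to-polytope lifting you carry out --- linearity of the flow LMI in $A$ so that the vertex inequalities imply $A(s)^TR(s)+R(s)A(s)+\dot{R}(s)\preceq0$ along every trajectory of \eqref{eq:evol}, the resulting monotonicity giving $\Psi^TR(\bar{T})\Psi\preceq R(0)$ for all $\Psi\in\boldsymbol{\Phi}_{\bar{T}}$, matrix convexity of $J\mapsto J^TR(0)J$ for $R(0)\succ0$ to pass from the $J_i$ to all of $\mathcal{J}$, positivity $R(\bar{T})\succeq\eps I$ from the jump LMI, and the reflection $R(\tau)=S(\bar{T}-\tau)$. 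Your compactness remark securing a uniform margin $\delta$ is also sound. On these directions you match, and in detail exceed, the paper's own argument.

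The gap is the converse direction $(b)\Rightarrow(c)$: you announce a plan but close nothing --- by your own words the feasibility of the sandwich is ``the decisive point to establish,'' and it is never established. Moreover the plan as stated is in real jeopardy. Fixing $R(\bar{T})=P$, you need a \emph{single} matrix $M=R(0)$ with $\Psi^TP\Psi\preceq M$ for all $\Psi\in\boldsymbol{\Phi}_{\bar{T}}$ and $J_i^TMJ_i\preceq P-\eps I$; but hypothesis \eqref{eq:dkslddjsldjsldsj} only bounds the individual members, $J^T\left(\Psi^TP\Psi\right)J\preceq P-\delta I$ pointwise in $\Psi$, and since the Loewner order is not a lattice, any common majorant $M$ of the compact family $\left\{\Psi^TP\Psi\right\}$ may lie strictly and uncontrollably above every member of it, so that $J^TMJ\preceq P-\eps I$ can fail; the margin $\delta$ quantifies members, not majorants, and compactness alone gives no control of that gap. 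Allowing $R(\bar{T})=Q\neq P$ does not dissolve the difficulty: feasibility of $(c)$ is equivalent to the existence of a $(b)$-certificate $Q$ that \emph{additionally} admits such a majorant $M$, which is visibly a stronger-looking requirement than $(b)$ itself. Finally, even granted the sandwich, your last step --- ``a flow-monotone interpolation $R(\tau)$ is filled in'' --- is asserted, not proved: endpoint domination $\Psi^TQ\Psi\preceq M$ does not obviously produce a differentiable path with $A_i^TR(\tau)+R(\tau)A_i+\dot{R}(\tau)\preceq0$ for all $i$ simultaneously; in the nominal Theorem~\ref{th:imp_p} this is done by the explicit backward transport \eqref{eq:sol} along the unique flow, which has no analogue for the differential inclusion. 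To your credit, you correctly isolate exactly the obstruction that the paper's one-line proof glosses over; but as a proof of the stated four-way equivalence, your proposal is incomplete at this point.
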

\begin{proof}
The proof simply follows from some convexity arguments.
%Let us introduce the matrices $A(\lambda)=\sum_{i=1}^N\lambda_iA_i$ and $J(\zeta)=\sum_{i=1}^N\zeta_iJ_i$, $\lambda,\zeta\in\Lambda_N$. The proof then simply relies on convexity arguments. Indeed, the LMI
%  \begin{equation}\label{eq:glab1}
%        A(\lambda(\tau))^TR(\tau)+R(\tau)A(\lambda(\tau))+\dot{R}(\tau)\preceq0
%  \end{equation}
% holds for all $\lambda(\tau)\in\Lambda_N$ and all $\tau\in[0,\bar{T}]$ if and only if
% $$A_i^TR(\tau)+R(\tau)A_i+\dot{R}(\tau)\preceq0$$
% holds for all $\tau\in[0,\bar{T}]$ and all $i=1,\ldots,N$. Similarly, using the fact that $R(0)\succ0$, the LMI
% \begin{equation}\label{eq:glab2}
%   J(\zeta)^TR(0)J(\zeta)-R(\bar{T})+\eps I\preceq0,\ \zeta\in\Lambda_N
% \end{equation}
% is convex in $\zeta$, and therefore holds for all  $\zeta\in\Lambda_N$ if and only if
% $$ J_i^TR(0)J_i-R(\bar{T})+\eps I\preceq0$$
% holds for all $i=1,\ldots,N$. The rest of the proof follows from the equivalence between \eqref{eq:glab1}-\eqref{eq:glab2} and \eqref{eq:dkslddjsldjsldsj} using the same arguments as for proving Theorem \ref{th:imp_p}. Note that the proof of Theorem 2.1 readily extends to the case of time-varying matrices where exponential terms simply have to be replaced by state-transition matrices belonging to $\boldsymbol{\Phi}_{\bar{T}}$.
\end{proof}

\section{Stabilization of periodic and aperiodic impulsive systems}\label{sec:stabz_is}

It is now shown that, unlike using looped-functionals, the current framework can be efficiently and accurately used for control design. To this aim, let us consider the impulsive system
\begin{equation}\label{eq:impsystc}
  \begin{array}{lcl}
    \dot{x}(t)&=&Ax(t)+B_cu_c(t),\ t\ne t_k\\
    x(t)&=&Jx^-(t)+B_du_d(t),\ t=t_k
  \end{array}
\end{equation}
where $u_c\in\mathbb{R}^{m_c}$ and $u_d\in\mathbb{R}^{m_d}$ are the control inputs. The following class of state-feedback control laws is considered:
\begin{equation}\label{eq:impsf}
  \begin{array}{rcl}
    u_c(t_k+\tau)&=&K_c(\tau)x(t_k+\tau),\ \tau\in[0,T_k),\\\
    u_d(t_k)&=&K_dx^-(t_k)
  \end{array}
\end{equation}
where the continuous control law is time-varying and the discrete one is time-invariant.% and discontinuous, exactly in the same way as the matrix function $\hat{Q}(t)$ involved in the discontinuous Lyapunov function. The discrete control law is, on the other hand, constant.
The purpose of this section is therefore to provide tractable conditions for finding suitable ${K_c:[0,\bar{T})\mapsto\mathbb{R}^{m_c\times n}}$ and $K_d\in\mathbb{R}^{m_d\times n}$ such that the closed-loop system \eqref{eq:impsystc}-\eqref{eq:impsf} is asymptotically stable.

\subsection{Periodic impulses case}

The next result gives constructive conditions for designing a control law of the form  \eqref{eq:impsf} for impulsive systems with $\bar{T}$-periodic impulses, i.e. $T_k=\bar{T}$. Suitable controller gains can, indeed, be directly extracted from the solutions of the sum-of-squares feasibility problems stated in the following result:
\begin{theorem}[Periodic impulses]\label{th:imp_p_stabz}
 The following statements are equivalent:
 \begin{enumerate}
   \item  There exists a control law \eqref{eq:impsf} such that the impulsive system  (\ref{eq:impsystc})-\eqref{eq:impsf} with $\bar{T}$-periodic impulses is asymptotically stable.
   %
  % \item[2)] There exists $P\in\mathbb{R}^n$ for which the quadratic form $V(x)=x^TPx$ is a discrete-time Lyapunov function for the periodic impulsive system~\eqref{eq:impsystc}-\eqref{eq:impsf}.
%   %
%   \item[3)] The discontinuous Lyapunov function $V_d$ defined as
%   \begin{equation}
%     V_d(x(t),t)=x(t)^T\hat{Q}(t)x(t)
%   \end{equation}
%   with  $\hat{Q}(t_k+\tau)=Q(\tau)$, $\tau\in[0,T_k)$ is pointwise-decreasing over $\{t_k\}_{k\in\mathbb{N}}$ and along the trajectories of the system  (\ref{eq:impsystc})-\eqref{eq:impsf}, i.e. $V_d(x(t_{k+1}),t_{k+1})-V_d(x(t_k),t_{k})$ for all $k\in\mathbb{N}$.
%%
% \item There exist a differentiable matrix function $R:[0,\bar{T}]\mapsto\mathbb{S}^n$, $R(\bar{T})\succ0$, a matrix function ${U_c:[0,\bar{T}]\mapsto\mathbb{R}^{m_c\times n}}$, a matrix $U_d\in\mathbb{R}^{m_d\times n}$ and a scalar $\eps>0$ such that the LMIs
%  \begin{equation}\label{eq:c1zb}
%    \He[AR(\tau)+B_cU_c(\tau)]-\dot{R}(\tau)\preceq0
%  \end{equation}
%  and
%  \begin{equation}\label{eq:c2zb}
%  \begin{bmatrix}
%    -R(0)+\eps I & JR(\bar{T})+B_dU_d\\
%    \star & -R(\bar{T})
%  \end{bmatrix}\preceq0
%  \end{equation}
%  hold for all $\tau\in[0,\bar{T}]$. In such a case, suitable matrices for the control-law \eqref{eq:impsf} are given by the expressions
%  \begin{equation}
%    \begin{array}{lclclcl}
%      K_c(\tau)&=&U_c(\tau)R(\tau)^{-1},&& K_d&=&U_dR(\bar{T})^{-1}.
%    \end{array}
%  \end{equation}
%
   \item There exist a differentiable matrix function $S:[0,\bar{T}]\mapsto\mathbb{S}^n$, $S(0)\succ0$, a matrix function ${U_c:[0,\bar{T}]\mapsto\mathbb{R}^{m_c\times n}}$, a matrix $U_d\in\mathbb{R}^{m_d\times n}$ and a scalar $\eps>0$ such that the LMIs
  \begin{equation}\label{eq:c1z}
    \He[AS(\tau)+B_cU_c(\tau)]+\dot{S}(\tau)\preceq0
  \end{equation}
  and
  \begin{equation}\label{eq:c2z}
  \begin{bmatrix}
    -S(\bar{T})+\eps I & JS(0)+B_dU_d\\
    \star & -S(0)
  \end{bmatrix}\preceq0
  \end{equation}
  hold for all $\tau\in[0,\bar{T}]$. In such a case, suitable matrices for the control law \eqref{eq:impsf} are given by the expressions
  \begin{equation}
    \begin{array}{lclclcl}
      K_c(\tau)&=&U_c(\tau)S(\tau)^{-1},&&    K_d&=&U_dS(0)^{-1}.
    \end{array}
  \end{equation}
 \end{enumerate}
\end{theorem}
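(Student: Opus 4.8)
The plan is to observe that, under the feedback \eqref{eq:impsf}, the closed-loop version of \eqref{eq:impsystc} is again an impulsive system of the form \eqref{eq:impsyst}, but with the time-varying flow matrix $\tilde{A}(\tau)=A+B_cK_c(\tau)$ and the jump matrix $\tilde{J}=J+B_dK_d$. Hence its asymptotic stability can be characterized by the (time-varying version of the) analysis result of Theorem \ref{th:imp_p}, and the whole task reduces to converting that analysis certificate into a condition that is convex in the controller variables by means of a congruence transformation and a Schur complement. Throughout I would use statement (e) of Theorem \ref{th:imp_p}, extended to time-varying systems as explained in the observation following Theorem \ref{th:imp_dt}.

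For $(a)\Rightarrow(b)$, assuming a stabilizing pair $(K_c,K_d)$ exists, statement (e) of Theorem \ref{th:imp_p} applied to the closed loop provides a differentiable $S_e:[0,\bar{T}]\mapsto\mathbb{S}^n$ with $S_e(\bar{T})\succ0$ (hence $S_e(\tau)\succ0$ on the whole interval) satisfying $\tilde{A}(\tau)^TS_e(\tau)+S_e(\tau)\tilde{A}(\tau)-\dot{S}_e(\tau)\preceq0$ and $\tilde{J}^TS_e(\bar{T})\tilde{J}-S_e(0)+\eps I\preceq0$. Setting $S(\tau):=S_e(\tau)^{-1}$, $U_c(\tau):=K_c(\tau)S(\tau)$ and $U_d:=K_dS(0)$, a congruence with $S(\tau)$ on the flow inequality, combined with the identity $\dot{S}(\tau)=-S(\tau)\dot{S}_e(\tau)S(\tau)$, turns it into $\He[AS(\tau)+B_cU_c(\tau)]+\dot{S}(\tau)\preceq0$, which is exactly \eqref{eq:c1z}. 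A congruence with $S(0)$ on the jump inequality gives $(JS(0)+B_dU_d)^TS(\bar{T})^{-1}(JS(0)+B_dU_d)-S(0)\prec0$ — the off-diagonal block $\tilde{J}S(0)=JS(0)+B_dU_d$ arising precisely because $U_d=K_dS(0)$ — and a Schur complement on the $S(\bar{T})^{-1}$ term then yields \eqref{eq:c2z}, the residual strict margin being absorbed into the $\eps I$ slack. The converse $(b)\Rightarrow(a)$ reverses these algebraic manipulations: from a feasible $(S,U_c,U_d,\eps)$ I would define $K_c(\tau)=U_c(\tau)S(\tau)^{-1}$, $K_d=U_dS(0)^{-1}$ and $S_e(\tau)=S(\tau)^{-1}$, undo the Schur complement and the two congruences to recover the statement-(e) conditions for the closed loop, and conclude asymptotic stability via Theorem \ref{th:imp_p}.

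The step I expect to be the main obstacle is the one that makes both directions legitimate, namely that $S(\tau)$ be invertible — in fact positive definite — for \emph{every} $\tau\in[0,\bar{T}]$, since (b) only imposes $S(0)\succ0$ explicitly while the congruences and the very definition $K_c(\tau)=U_c(\tau)S(\tau)^{-1}$ require positivity on the whole interval. I would settle this exactly as in the proof of (c)$\Rightarrow$(d) of Theorem \ref{th:imp_p}: the $(1,1)$ block of \eqref{eq:c2z} already forces $S(\bar{T})\succeq\eps I\succ0$, and writing \eqref{eq:c1z} as the equality $\He[\tilde{A}(\tau)S(\tau)]+\dot{S}(\tau)=-\hat{W}(\tau)$ with $\hat{W}(\tau)\succeq0$ and integrating yields a representation $S(\tau)=\Phi(\tau)\big(S(0)-\Omega(\tau)\big)\Phi(\tau)^T$, where $\Phi$ is the relevant state-transition matrix and $\Omega(\tau)\succeq0$ is nondecreasing. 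Since $S(\bar{T})\succ0$ forces $S(0)\succ\Omega(\bar{T})\succeq\Omega(\tau)$ for all $\tau\le\bar{T}$, it follows that $S(\tau)\succ0$ throughout, so the gains are well defined and all transformations are reversible, completing the equivalence.
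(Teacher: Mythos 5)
Your algebraic core coincides with the paper's. The congruence of \eqref{eq:c1z} by $S(\tau)^{-1}$, together with $\tfrac{\d}{\d\tau}S^{-1}=-S^{-1}\dot{S}S^{-1}$, recovers \eqref{eq:c1b} for the closed-loop flow matrix exactly as in the paper; for the jump condition the paper writes \eqref{eq:c2z} as an inertia-constrained quadratic form and invokes the dualization lemma of \citep{Scherer:00}, whereas you use a congruence by $S(0)$ followed by a Schur complement --- these are interchangeable, equally lossless manipulations, so that part of your proposal is a perfectly acceptable (arguably more elementary) variant.

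The gap is precisely at the step you flagged, and your proposed resolution does not close it. First, it is circular: to rewrite \eqref{eq:c1z} as $\He[\tilde{A}(\tau)S(\tau)]+\dot{S}(\tau)=-\hat{W}(\tau)$ with $\tilde{A}(\tau)=A+B_cU_c(\tau)S(\tau)^{-1}$ you must already invert $S(\tau)$. Unlike the proof of (c)$\Rightarrow$(d) of Theorem \ref{th:imp_p}, where the transition matrix $e^{-A\tau}$ is fixed, controller-free and invertible a priori, here $\Phi$ is generated by $\tilde{A}$, whose entries blow up non-integrably at the first singularity of $S$, so $\Phi$ degenerates and the representation $S(\tau)=\Phi(\tau)\left(S(0)-\Omega(\tau)\right)\Phi(\tau)^T$ does not extend past that point. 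Worse, the conclusion you assert is false: the LMIs of statement (b) do \emph{not} force $S(\tau)\succ0$ on $(0,\bar{T})$. Take $n=m_c=m_d=1$, $A=0$, $B_c=B_d=1$, any $J$, and set $S(\tau)=\cos(2\pi\tau/\bar{T})$, $U_c(\tau)=-\dot{S}(\tau)/2$, $U_d=-J$, $\eps=1/2$: then \eqref{eq:c1z} holds with equality, \eqref{eq:c2z} reads $\diag(-1/2,-1)\preceq0$, and $S(0)=S(\bar{T})=1\succ0$, yet $S(\bar{T}/2)=-1$ and $K_c(\tau)=U_c(\tau)/S(\tau)$ is undefined at $\tau=\bar{T}/4$ (there the ``transition matrix'' is $\Phi(\tau)=S(\tau)^{1/2}\to0$, confirming the degeneracy). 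Hence your (b)$\Rightarrow$(a) argument, which extracts the gains and ``undoes'' the congruences, fails on such feasible points; the implication from an \emph{arbitrary} feasible point of (b) is left unproven. The correct repair is to impose interior positivity rather than derive it: the (a)$\Rightarrow$(b) direction produces $S=S_e^{-1}$ with $S_e(\tau)\succ0$ on all of $[0,\bar{T}]$ --- this is where the remark following Theorem \ref{th:imp_p} legitimately applies, namely to the closed-loop analysis certificate $S_e$, whose flow matrix is bounded --- so adding the requirement $S(\tau)\succ0$ on $[0,\bar{T}]$ to statement (b) is lossless, makes every manipulation reversible, and is what an SOS implementation enforces anyway. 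This is also the reading under which the paper's own terse ``equivalence follows from the losslessness of the manipulations'' (which likewise performs the congruence by $S^{-1}$ without comment) is correct.
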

\begin{proof}
What has to be proven is the exactness of the stabilization conditions \eqref{eq:c1z}-\eqref{eq:c2z}.  By performing a congruence transformation on \eqref{eq:c1z} with respect to $\tilde{S}:=S^{-1}$ we get that
\begin{equation}\label{eq:kdslkdlsk}
    \He\left[\tilde{S}(\tau)(A+B_cK_c(\tau))\right]-\dot{\tilde{S}}(\tau)\preceq0
\end{equation}
where we used the facts that $K_c(\tau)=U_c(\tau)\tilde{S}(\tau)$ and $\tilde{S}(\tau)\dot{S}(\tau)\tilde{S}(\tau)=-\dot{\tilde{S}}(\tau)$. Looking now at the LMI \eqref{eq:c1z}, we can easily see that it is equivalent to
\begin{equation}
\begin{bmatrix}
  I\\
  (J+B_dK_d)^T
\end{bmatrix}^T \begin{bmatrix}
  -S(\bar{T}) & 0\\
  0 & S(0)
\end{bmatrix}\begin{bmatrix}
  I\\
  (J+B_dK_d)^T
\end{bmatrix}\prec0.
\end{equation}
Noting then that the central matrix has $n$ positive and $n$ negative eigenvalues, and that the outer-factors are of rank $n$, then the dualization lemma \citep{Scherer:00} applies, and we get the equivalent LMI
\begin{equation}\label{eq:kdslkdlsk768}
  (J+B_dK_d)^T\tilde{S}(\bar{T})(J+B_dK_d)-\tilde{S}(0)\prec0.
\end{equation}
%and thus that
%\begin{equation}\label{eq:kdslkdlsk768}
%  (J+B_dK_d)^T\tilde{S}(\bar{T})(J+B_dK_d)-\tilde{S}(0)+\eps^\prime I\preceq0
%\end{equation}
%for some $\eps^\prime>0$.
Noting finally that the conditions \eqref{eq:kdslkdlsk}-\eqref{eq:kdslkdlsk768} are identical to \eqref{eq:c1b}-\eqref{eq:c2b} proves the result. Equivalence follows from the losslessness of the manipulations. %Statement (b) is proved along the same lines.
%The closed-loop system is given by
%  \begin{equation}\label{eq:impsystcl}
%  \begin{array}{lcl}
%    \dot{x}(t)&=&(A+B_cK_c(t-t_k))x(t),\ t\in[t_k,t_{k+1})\\
%    x(t)&=&(J+B_dK_d)x^-(t),\ t=t_k.
%  \end{array}
%\end{equation}
%The key idea is to use Theorem \ref{th:imp_p} to prove stability of the closed-loop system. To simplify the derivation of convex synthesis conditions, the adjoint system \citep{Lawrence:10} with reverse time of the above system is considered
%  \begin{equation}\label{eq:impsystcla}
%  \begin{array}{lcl}
%    \dot{y}(t)&=&(A+B_cK_c(t-t_k))^Ty(t),\ t\in[t_k,t_{k+1})\\
%    y(t)&=&(J+B_dK_d)^Ty^-(t),\ t=t_k.
%  \end{array}
%\end{equation}
%The crucial point here is that proving stability of \eqref{eq:impsystcla} is equivalent to proving stability of \eqref{eq:impsystcl}. Substituting then the matrices of system~\eqref{eq:impsystcla} in the stability conditions \eqref{eq:c1}-\eqref{eq:c2} we get
%\begin{equation}
%  \begin{array}{rcl}
%    (A+B_cK_c(\tau))R(\tau)+R(\tau)(A+B_cK(\tau))^T+\dot{R}(\tau)&\preceq&0\\
%    (J+B_dK_d)R(0)(J+B_dK_d)^T-R(\bar{T})+\eps I\preceq0.
%  \end{array}
%\end{equation}
%The changes of variables $U_c(\tau)=K_c(\tau)R(\tau)$ and $U_d=K_dR(0)$ followed by a Schur complement yield the result.
%
%Equivalence between the statements (a) and (b) follows from the exactness of the above manipulations. Equivalence between (b) and (c) follows from Theorem \ref{th:imp_p}.
\end{proof}
\begin{remark}
  Note that if the conditions of statement d) of Theorem \ref{th:imp_p} had been used, we would have obtained a controller matrix depending on the dwell-time $T_k$, which may not be implementable. This fact emphasizes the importance of statement e) of Theorem \ref{th:imp_p}.
\end{remark}
% \begin{remark}
%   It should be pointed out that the design of other state-feedback controllers such as $u_c(t)=K_cx(t)$ with a constant $K_c$ might still be possible with some degree of conservatism. By indeed introducing constant slack-variables \citep{Tuan:01,Geromel:06} in the stability conditions, the design problem of such a control law can be conservatively solved. This situation is not treated here for brevity.
% \end{remark}
%\begin{remark}
%  Whenever stabilization with ranged dwell-time is considered, the conditions of statement (b) should be considered in order to get a controller matrix $K_d$ that is independent of $T_k$ since the associated decision variable $U_d$ is coupled to $K_d$ via the matrix $S(0)$. Statement (c) will, on the other hand, lead to a controller matrix $K_d$ that depends on $T_k$ since $U_d$ and $K_d$ are coupled via $S(T_k)$.
%\end{remark}

\subsection{Stabilization under minimum dwell-time}

The stabilization under minimum dwell-time is slightly more complicated since the controller gain $K_c(\tau)$ in \eqref{eq:impsf} must remain bounded as $\tau\to\infty$. In the best case, it should converge to a finite value. A way for solving this difficulty is to consider the following controller gain
\begin{equation}\label{eq:sfKdt}
  K_c(\tau)=\left\{\begin{array}{lcl}
        \tilde{K}_c(\tau)&&\text{if\ }\tau\in[0, \bar{T})\\
        \tilde{K}_c(\bar{T}) &&\text{if\ }\tau\in[\bar{T}, T_k)
  \end{array}\right.
\end{equation}
where $T_k\ge\bar{T}$, $k\in\mathbb{N}$ and $\tilde{K}_c(\tau)$ is some matrix function to be determined. This specific structure for the control law, as it will be emphasized later, arises naturally from the structure of the minimum dwell-time stability conditions and will be shown to be non-restrictive. Again the matrices of the controller can be extracted from the solutions of the feasibility problem stated in the following result:
\begin{theorem}[Minimum dwell-time]\label{th:imp_dt_stabz}
 The following statements are equivalent:
 \begin{enumerate}
   \item  There exist matrices $P\in\mathbb{S}_{\succ0}^n$, $K_d\in\mathbb{R}^{m_d\times n}$ and a matrix function $\tilde{K}_c:[0,\bar{T}]\mapsto\mathbb{R}^{m_c\times n}$ such that the matrix inequalities
   \begin{equation}\label{eq:stabzmDT1}
     (A+B_c\tilde{K}_c(\bar{T}))^TP+P(A+B_c\tilde{K}_c(\bar{T}))\prec0
   \end{equation}
   and
   \begin{equation}\label{eq:stabzmDT2}
     (J+B_dK_d)^T\Phi(\bar{T})^TP\Phi(\bar{T})(J+B_dK_d)-P\prec0
   \end{equation}
   holds where $\Phi:[0,\infty)\mapsto\mathbb{R}^{n\times n}$ is the transition matrix defined as
\begin{equation}
  \begin{array}{rcl}
    \dfrac{d}{d\tau}\Phi(\tau)&=&[A+B_cK_c(\tau)]\Phi(\tau),\ \tau\ge0\\
    \Phi(0)&=&I.
  \end{array}
\end{equation}
   \item There exist a differentiable matrix function $S:[0,\bar{T}]\mapsto\mathbb{S}^n$, $S(\bar{T})\succ0$, a matrix function ${U_c:[0,\bar{T}]\mapsto\mathbb{R}^{m_c\times n}}$, a matrix $U_d\in\mathbb{R}^{m_d\times n}$ and a scalar $\eps>0$ such that the LMIs
 \begin{equation}\label{eq:dtz1}
   \He[AS(\bar{T})+B_cU_c(\bar{T})]\prec0,
 \end{equation}
  \begin{equation}\label{eq:dtz2}
    \He[AS(\tau)+B_cU_c(\tau)]+\dot{S}(\tau)\preceq0
  \end{equation}
  and
  \begin{equation}\label{eq:dtz3}
  \begin{bmatrix}
    -S(0)+\eps I & JS(\bar{T})+B_dU_d\\
    \star & -S(\bar{T})
  \end{bmatrix}\preceq0
  \end{equation}
  hold for all $\tau\in[0,\bar{T}]$. In this case, suitable controller gains are retrieved using
  \begin{equation}
    \begin{array}{lclclcl}
      \tilde{K}_c(\tau)&=&U_c(\tau)S(\tau)^{-1},&& K_d&=&U_dS(\bar{T})^{-1}.
    \end{array}
  \end{equation}
 \end{enumerate}
 Moreover, in such a case, the closed-loop system  \eqref{eq:impsystc}-\eqref{eq:impsf}-\eqref{eq:sfKdt} is asymptotically stable with minimum dwell-time $\bar{T}$.
\end{theorem}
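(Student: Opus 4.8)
The plan is to follow the template of the proof of Theorem~\ref{th:imp_p_stabz} and reduce the synthesis conditions \eqref{eq:dtz1}--\eqref{eq:dtz3} to the closed-loop instance of the analysis conditions of Theorem~\ref{th:imp_dt}, statement (c). Writing $\hat{A}(\tau):=A+B_c\tilde{K}_c(\tau)$ and $\hat{J}:=J+B_dK_d$ for the closed-loop flow and jump matrices, the idea is that \eqref{eq:dtz1}--\eqref{eq:dtz3} are nothing but \eqref{eq:minDTc1}--\eqref{eq:minDTc3} written for $(\hat{A},\hat{J})$ after a congruence transformation and a dualization, so that the \emph{synthesis} problem collapses onto the already-solved \emph{analysis} problem for the closed-loop system. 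First I would set $\tilde{S}(\tau):=S(\tau)^{-1}$ and recover the gains through $\tilde{K}_c(\tau)=U_c(\tau)\tilde{S}(\tau)$ and $K_d=U_d\tilde{S}(\bar{T})$, exactly as in Theorem~\ref{th:imp_p_stabz}.

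The continuous conditions are handled by congruence. Performing a congruence transformation of \eqref{eq:dtz2} with $\tilde{S}(\tau)$ and using the identity $\tilde{S}(\tau)\dot{S}(\tau)\tilde{S}(\tau)=-\dot{\tilde{S}}(\tau)$ turns it into $\He[\tilde{S}(\tau)\hat{A}(\tau)]-\dot{\tilde{S}}(\tau)\preceq0$, which is exactly \eqref{eq:minDTc2} for the closed loop; the same congruence applied to \eqref{eq:dtz1} yields $\He[\tilde{S}(\bar{T})\hat{A}(\bar{T})]\prec0$, i.e. \eqref{eq:minDTc1} with Lyapunov matrix $\tilde{S}(\bar{T})\succ0$. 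The discrete condition \eqref{eq:dtz3} is the delicate step and is handled by duality. After substituting for $U_d$ so as to expose the factor $\hat{J}$, the $2\times2$ block becomes a congruence of a block-diagonal central matrix of the form $\diag(-S(\bar{T}),S(0))$ against an outer factor of rank $n$; since this central matrix carries $n$ positive and $n$ negative eigenvalues, the dualization lemma \citep{Scherer:00} applies and produces the closed-loop jump inequality $\hat{J}^T\tilde{S}(\bar{T})\hat{J}-\tilde{S}(0)\prec0$, that is, \eqref{eq:minDTc3}. The inertia hypothesis requires $S(0)\succ0$; as in Theorem~\ref{th:imp_p}, this need not be postulated but follows from $S(\bar{T})\succ0$ together with \eqref{eq:dtz2} by propagating positivity along the interval. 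Because congruence and dualization are lossless, \eqref{eq:dtz1}--\eqref{eq:dtz3} hold if and only if $(\hat{A},\hat{J})$ satisfy \eqref{eq:minDTc1}--\eqref{eq:minDTc3}.

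It then remains to invoke Theorem~\ref{th:imp_dt}, in the time-varying form licensed by the remark following it (the matrix exponential being replaced by the state-transition matrix $\Phi$ generated by $\hat{A}$), to conclude that these closed-loop conditions are equivalent to its statement (a), namely \eqref{eq:stabzmDT1}--\eqref{eq:stabzmDT2}, and that they imply closed-loop asymptotic stability under minimum dwell-time $\bar{T}$, which is the final assertion. Finally, I would justify that the piecewise gain \eqref{eq:sfKdt} is non-restrictive: on $[\bar{T},T_k)$ the gain is frozen at $\tilde{K}_c(\bar{T})$, so the flow matrix there is the \emph{constant} $\hat{A}(\bar{T})$, whose Hurwitz stability is precisely what \eqref{eq:dtz1} (equivalently \eqref{eq:stabzmDT1}) enforces; this is exactly what the minimum dwell-time mechanism of Theorem~\ref{th:imp_dt} demands of the tail $\tau\ge\bar{T}$, so no generality is lost.

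The main obstacle is the discrete step, on two counts. First, one must get the inertia and rank hypotheses of the dualization lemma exactly right and establish the automatic positivity of $S(0)$, without which the Schur-complement/dualization manipulation is not justified. Second, and more subtly, one must track the time-parametrization and the two endpoints $0$ and $\bar{T}$ carefully so that the dualized jump inequality certifies the \emph{forward-time} closed-loop monodromy (the one built from $\Phi(\bar{T})$ in \eqref{eq:stabzmDT2}) rather than a time-reversed surrogate; this bookkeeping is what consistently ties together the synthesis variable $S$, its inverse $\tilde{S}$, and the Lyapunov matrix $P$ of statement (a), and it is the point that most deserves care.
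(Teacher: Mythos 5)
Your proposal is correct and follows essentially the paper's own route: the paper likewise obtains the equivalence of (a) and (b) from the lossless congruence/dualization manipulations of Theorem~\ref{th:imp_p_stabz} via the change of variables $U_c(\tau)=\tilde{K}_c(\tau)S(\tau)$, $U_d=K_dS(\bar{T})$, and proves closed-loop minimum dwell-time stability exactly through the frozen-gain factorization $\Phi(\bar{T}+\delta)=e^{(A+B_c\tilde{K}_c(\bar{T}))\delta}\Phi(\bar{T})$ combined with the arguments of Theorem~\ref{th:imp_dt}, which is precisely your ``tail'' mechanism. The only cosmetic differences are that the paper separately notes the trivial scenario in which impulses cease (handled directly by \eqref{eq:stabzmDT1}), and that positivity of $S(0)$ follows even more immediately from the $(1,1)$ block of \eqref{eq:dtz3} than from your propagation argument.
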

\begin{proof}
The first thing that has to be proven is the fact that statement (a) implies that the closed-loop system is stable with minimum dwell-time $\bar{T}$. The equivalence between (a) and (b) follows from Theorem \ref{th:imp_p_stabz} and the changes of variables $U_c(\tau)=\tilde{K}_c(\tau)S(\tau)$ and $U_d=K_dS(\bar{T})$.
Let us prove then that statement (a) implies that the closed-loop system is stable with minimum dwell-time $\bar{T}$. Two possible scenarios: 1) either impulses always arrive in finite-time, i.e. $\bar{T}\le T_k<\infty$; or 2) impulses stop at some point. Stability of the second case is straightforward from condition \eqref{eq:dtz1}. Let us then focus on the first case. We need to show there that the Lyapunov function $V(x)=x^TPx$ evaluated at times $t_k$ and along the trajectories of the closed-loop system \eqref{eq:impsystc}-\eqref{eq:impsf}-\eqref{eq:sfKdt} is pointwise decreasing and remains bounded between impulses. It is indeed pointwise decreasing whenever the LMI
\begin{equation}\label{eq:djksjdsjdkj}
     (J+B_dK_d)^T\Phi(\theta)^TP\Phi(\theta)(J+B_dK_d)-P\prec0
   \end{equation}
   holds for all finite $\theta\in[\bar{T},\infty)$. By using the fact that $\Phi(\bar{T}+\delta)=e^{(A+B_cK_c(\bar{T}))\delta}\Phi(\bar{T})$ for all $\delta\ge0$ and the same arguments as in the proof of Theorem \ref{th:imp_dt}, we have that conditions \eqref{eq:stabzmDT1} and \eqref{eq:stabzmDT2} implies that \eqref{eq:djksjdsjdkj} holds for all finite $\theta\in[\bar{T},\infty)$. The convergence of the trajectories of the impulsive system to 0 simply follows from the boundedness and continuity of the function $V(x(t))$ on every interval $(t_k,t_{k+1})$. The proof is complete.

\end{proof}

In the above result, we can clearly see that the structure of the control law fits exactly the structure of the conditions, which allows us to obtain lossless results. Without this maintain of the value of $\tilde{K}(\tau)$ to $\tilde{K}(\bar{T})$ for all $\tau\in[\bar{T}, T_k)$, deriving a minimum dwell-time stabilization result would have been much trickier. As a concluding remark on stabilization under minimum dwell-time, we note that the proposed control law just needs to be computed offline and is easy to implement.

\begin{example}%%% UPDATED OK WITH S(T)
Let us consider the system~\eqref{eq:impsystc} with matrices
\begin{equation}\label{eq:exol}
  A=\begin{bmatrix}
    1 &0\\1& 2
  \end{bmatrix},\ B=\begin{bmatrix}
    1\\0
  \end{bmatrix}\ \text{and}\ J=\begin{bmatrix}
    1 &1\\1& 3
  \end{bmatrix}.
\end{equation}
Note that this system has both unstable flow and jumps. If we therefore assume that $K_d=0$, then the system cannot be stabilized for arbitrary dwell-times since the jumps are destabilizing. Thus, we want to compute $\tilde{K}_c(\tau)$ such that the minimum dwell-time is, at most, $\bar{T}=0.1$. Invoking Theorem \ref{th:imp_dt_stabz}, statement (b), with polynomial matrices $U_c(\tau)$ and $S(\tau)$ of order 1, we find the controller
\begin{equation*}
  \tilde{K}_c(\tau)=\dfrac{1}{d(\tau)}\begin{bmatrix}
1.4750481+3.2714889\tau-41.011914\tau^2\\
3.9063911-1.6733059\tau-37.472443\tau^2
  \end{bmatrix}^T
\end{equation*}
where $d(\tau)=-0.19767438+0.78454217\tau+7.6562219\tau^2$. State-trajectories of the closed-loop system, for some randomly generated impulse-times satisfying the dwell-time constraint, are depicted in Fig. \ref{fig:impcl}. We can clearly see that the controller stabilizes the system. %\rouge{In terms of computational complexity, only 75 variables and 24 constraints are defined by SOSTOOLS.}
In terms of computational complexity, only 27 variables are defined by SOSTOOLS.
\end{example}

\begin{figure}[h]
  \centering
  \includegraphics[width=0.4\textwidth]{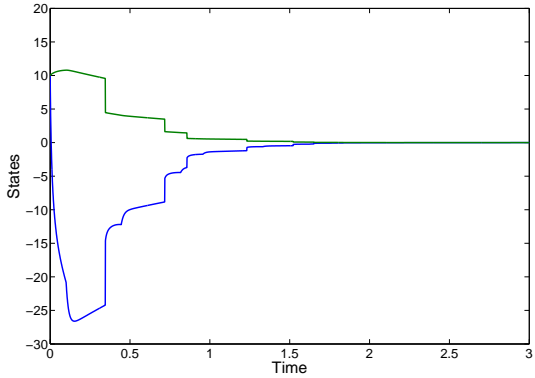}
  \caption{State-trajectories of the closed-loop system~\eqref{eq:exol}.}\label{fig:impcl}
\end{figure}

\section{Application to aperiodic uncertain sampled-data systems}\label{sec:sds}

It is well-known that sampled-data systems can be reformulated as impulsive systems. On the basis of this reformulation, all the results developed in the previous sections therefore apply. It is hence possible to obtain robust stabilization results for aperiodic uncertain sampled-data systems with inter-sampling times in a range, providing then a solution to this challenging problem. In terms of robust stability analysis, the proposed approach is computationally less expensive than those based on looped-functionals; see \citep{Briat:12h,Seuret:13}. %Input-delay based methods \citep{Fridman:04,Fridman:10} are less expensive but less accurate as well, and cannot be efficiently used for control without drastic simplifications. Discrete-time approaches perform very well for stabilization, see e.g. \citep{Oishi:10}, but are, until now, difficult to extend to uncertain systems due to the presence of exponential terms. The advantages of the proposed approach lie in the derivation of accurate convex robust stability and stabilization results together with a lower complexity than by using looped-functionals \citep{Briat:12h,Seuret:13}.

%A hybrid system approach is proposed in \citep{Dai:10} for the design of control laws for synchronous LTI sampled-data systems without uncertainties. The approach is also based on a Lyapunov function that depends on a parameter $\tau$ corresponding to the elapsed time since the last control input update. The method relies on an explicit construction of a suitable Lyapunov matrix allowing to obtain tractable synthesis conditions. The method, however, is limited to the LTI case since the Lyapunov matrix is built using transition matrices which only admit closed-form expressions when the involved matrices are constant. The extension to uncertain systems is also difficult to the loss of convexity in the parameters and the dependence on the uncertain parameters the explicit construction would create. The advantage of considering implicit conditions in this paper lies in the great flexibility of optimization problems to be able to capture a wider class of systems. Aperiodic sampling as well as time-varying uncertainties can be straightforwardly considered and design conditions are convex, all of this together with a reduced computational complexity compared to looped-functionals, as demonstrated in the part on impulsive systems. The method is, moreover, not restricted to linear systems but can be also generalized to nonlinear systems.

\subsection{Preliminaries}

Let us consider here the continuous-time system
\begin{equation}\label{eq:sd}
    \dot{x}(t)=Ax(t)+Bu(t)
\end{equation}
where $x\in\mathbb{R}^n$ and $u\in\mathbb{R}^m$ are the state of the system and the control input, respectively. The control input is assumed to be computed from a sampled-data state-feedback control law given by
  \begin{equation}\label{eq:sd_sf}
    u(t)=K_1x(t_k)+K_2u(t_{k-1}),\ t\in[t_k,t_{k+1})
  \end{equation}
  where $K_1\in\mathbb{R}^{m\times n}$ and $K_2\in\mathbb{R}^{m\times m}$ are the control gains to be determined. Note that the control-law, viewed as a discrete-map from $x$ to $u$, is BIBO-stable if and only if $K_2$ is Schur. %We shall, however, restrict ourselves to bounded increments $T_k$.
%\begin{itemize}
%  \item either a sampled-data state-feedback control law given by
%  \begin{equation}\label{eq:sd_sf}
%    u(t)=K_1x(t_k)+K_2x(t_{k-1}),\ t\in[t_k,t_{k+1})
%  \end{equation}
%  or
%  \item a sampled-data dynamic output feedback control law given by
%  \begin{equation}\label{eq:sd_dof}
%  \begin{array}{lcl}
%    x_c(t_{k+1})&=&A_cx_c(t_k)+B_c^1y(t_k)+B_c^2u(t_{k-1})\\
%    u(t)&=&C_cx_c(t_k)+D_c^1y(t_k)+D_c^2u(t_{k-1}),\ t\in[t_k, t_{k+1}).
%  \end{array}
%  \end{equation}
%\end{itemize}
Above, the sequence of sampling instants $\{t_k\}_{k\in\mathbb{N}}$ is assumed to be strictly increasing and unbounded, i.e. $t_k\rightarrow\infty$, excluding therefore any Zeno behavior.

The sampled-data system~\eqref{eq:sd}-\eqref{eq:sd_sf} can be equivalently reformulated as the following impulsive system
\begin{equation}\label{eq:sd_sf_i}
    \begin{array}{rcl}
      \begin{bmatrix}
        \dot{x}(t)\\
        \dot{z}(t)
      \end{bmatrix}&=&\underbrace{\begin{bmatrix}
        A & B\\
        0 & 0
      \end{bmatrix}}_{\mbox{$\bar{A}$}}\begin{bmatrix}
       x(t)\\
        z(t)
      \end{bmatrix},\ t\ne t_k\\
      \begin{bmatrix}
       x(t)\\
       z(t)
      \end{bmatrix}&=&\underbrace{\begin{bmatrix}
        I & 0\\
        K_1 & K_2
      \end{bmatrix}}_{\mbox{$\bar{J}$}}\begin{bmatrix}
       x^-(t)\\
        z^-(t)
      \end{bmatrix},\ t=t_k
    \end{array}
  \end{equation}
  where $z\in\mathbb{R}^m$ is an additional state containing the value of the held control input at any time, i.e. $z(t)=u(t_k)$, $t\in[t_k,t_{k+1})$. For convenience, we also decompose  $\bar{J}$ as  $\bar{J}=J_0+B_0K$ where
  \begin{equation}
  J_0=\begin{bmatrix}
    I & 0\\
    0 & 0
  \end{bmatrix},\ B_0=\begin{bmatrix}
    0\\
    I
  \end{bmatrix}\ \text{and}\ K=\begin{bmatrix}
    K_1 & K_2
  \end{bmatrix}.
  \end{equation}

%\begin{propositon}
%  The sampled-data system \eqref{eq:sd}-\eqref{eq:sd_dof} admits the following impulsive system formulation
%  \begin{equation}\label{eq:sd_dof_i}
%    \begin{array}{lcl}
%      \begin{bmatrix}
%        \dot{x}(t)\\
%        \dot{z}_1(t)\\
%        \dot{z}_2(t)
%      \end{bmatrix}&=&\underbrace{\begin{bmatrix}
%        A & 0 & B\\
%        0 & 0 & 0\\
%        0 & 0 & 0
%      \end{bmatrix}}_{\mbox{$A_{dof}$}}\begin{bmatrix}
%       x(t)\\
%        z_1(t)\\
%        z_2(t)
%      \end{bmatrix}\\
%      \begin{bmatrix}
%       x(t_{k+1})\\
%       z_1(t_{k+1})\\
%       z_2(t_{k+1})
%      \end{bmatrix}&=&\undebrace{\begin{bmatrix}
%        I & 0 & 0\\
%        A_c & B_c^1 & B_c^2\\
%        C_c & D_c^1 & D_c^2
%      \end{bmatrix}}_{\mbox{$J_{dof}$}}
%    \end{array}
%  \end{equation}
%  where $z_1\in\mathbb{R}^n$ and $z_2\in\mathbb{R}^m$ are additional states of the same dimension as the state of the system and the control input, respectively.
%\end{propositon}

\subsection{Stabilization of sampled-data systems}

%\subsubsection{State-feedback case}
Stabilization of sampled-data systems being the most interesting problem, we will therefore focus on the stabilization of aperiodic sampled-data systems. The periodic case is readily recovered by setting $T_{max}=T_{min}=\bar{T}$. As in the previous stabilization results, a suitable stabilizing controller gain $K$ can be extracted from the solutions of a feasibility problem.

%\begin{theorem}[Periodic sampled-data systems]\label{th:sd_stabz_p}
%The following statements are equivalent:
%\begin{enumerate}
%  \item There exists a gain $K\in\mathbb{R}^{m\times(m+n)}$ such that the $\bar{T}$-periodic sampled-data system~\eqref{eq:sd}-\eqref{eq:sd_sf} is asymptotically stable.
%      %
% % \item There exist matrices $P\in\mathbb{S}_{\succ0}^{n+m}$ and $U\in\mathbb{R}^{m\times(n+m)}$ such that the LMI
%%      \begin{equation}
%%        \begin{bmatrix}
%%          -P & e^{A\bar{T}}\left(J_0P+B_0U\right)\\
%%          \star & -P
%%        \end{bmatrix}\prec0
%%      \end{equation}
%%      holds. Moreover, in such a case, a suitable stabilizing controller gain is retrieved using $K=UP^{-1}$.
%  \item There exist a differentiable matrix function $R:[0,\bar{T}]\mapsto\mathbb{S}^{n+m}$, $S(0)\succ0$, a matrix $Y\in\mathbb{R}^{m\times(n+m)}$ and a scalar $\eps>0$ such that the conditions
%  \begin{equation}
%    \bar{A}(\tau)S(\tau)+S(\tau)\bar{A}(\tau)^T+\dot{S}(\tau)\preceq0
%  \end{equation}
%  and
%  \begin{equation}
%  \begin{bmatrix}
%    -S(\bar{T})+\eps I & J_0S(0)+B_0Y\\
%    \star & -S(0)
%  \end{bmatrix}\preceq0
%  \end{equation}
%  hold for all $\tau\in[0,\bar{T}]$. Moreover, in such a case, a suitable stabilizing control gain is retrieved using ${K=YS(0)^{-1}}$.
%\end{enumerate}
%\end{theorem}
%%
%\begin{proof}
%  The proof relies follows the same lines as the ones of the proofs of the stabilization results developed for impulsive systems.
%\end{proof}

%When the system is aperiodic, the following result can be obtained:
\begin{theorem}[Aperiodic sampled-data systems]\label{th:sd_stabz_ap}
The following statements are equivalent:
\begin{enumerate}
\item There exists a control law of the form \eqref{eq:sd_sf} that quadratically stabilizes the system  \eqref{eq:sd} for any aperiodic sampling instant sequence $\{t_k\}$ such that $T_k\in[T_{min},T_{max}]$.
%
   %\item There exist matrices $P\in\mathbb{S}_{\succ0}^{n+m}$ and $U\in\mathbb{R}^{n\times(n+m)}$ such that the LMI
%      \begin{equation}
%        \begin{bmatrix}
%          -P & e^{A\theta}\left(J_0P+B_0U\right)\\
%          \star & -P
%        \end{bmatrix}\prec0
%      \end{equation}
%      holds for all $\theta\in[T_{min},T_{max}]$. Moreover, when this statement holds, a suitable stabilizing control gain can be obtained using the expression $K=UP^{-1}$.
      %
  \item There exist a differentiable matrix function $R:[0,T_{max}]\mapsto\mathbb{S}^{n+m}$, $S(0)\succ0$, a matrix $Y\in\mathbb{R}^{m\times(n+m)}$ and a scalar $\eps>0$ such that the conditions
  \begin{equation}
    \bar{A}(\tau)S(\tau)+S(\tau)\bar{A}(\tau)^T+\dot{S}(\tau)\preceq0
  \end{equation}
  and
  \begin{equation}
   \begin{bmatrix}
    -S(\theta)+\eps I & J_0+B_0Y\\
    \star & -S(0)
  \end{bmatrix}\preceq0
  \end{equation}
  hold for all $\tau\in[0,T_{max}]$ and all $\theta\in[T_{min},T_{max}]$. Moreover, when this statement holds, a suitable stabilizing control gain can be obtained using the expression $K=YS(0)^{-1}$.
  \end{enumerate}
\end{theorem}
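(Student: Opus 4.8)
The plan is to mirror the proof of Theorem~\ref{th:imp_p_stabz}, but to work on the impulsive reformulation~\eqref{eq:sd_sf_i} of the closed-loop sampled-data system and to invoke the \emph{ranged dwell-time} characterization of Theorem~\ref{th:imp_a} rather than the periodic one. The starting observation is that, once the control law~\eqref{eq:sd_sf} is substituted, the sampled-data system~\eqref{eq:sd} is \emph{exactly} the impulsive system~\eqref{eq:sd_sf_i} with the fixed flow matrix $\bar{A}$ and the jump matrix $\bar{J}=J_0+B_0K$, and the dwell-time of the reformulation coincides with the inter-sampling interval $T_k$. Hence a law~\eqref{eq:sd_sf} quadratically stabilizes~\eqref{eq:sd} for every sequence with $T_k\in[T_{min},T_{max}]$ if and only if there is a gain $K$ rendering~\eqref{eq:sd_sf_i} quadratically stable under ranged dwell-time $[T_{min},T_{max}]$. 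The whole task thus reduces to showing that the LMIs of statement (b) are a lossless reformulation of the closed-loop version of the stability conditions of Theorem~\ref{th:imp_a}. Note that the controller acts \emph{only} through $\bar{J}$ (the continuous dynamics are uncontrolled, since the hold keeps $u$ constant between samples), which is precisely why the first LMI of (b) carries no decision variable other than $S(\tau)$.

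Next I would process statement (b) by the two algebraic manoeuvres already used in Theorem~\ref{th:imp_p_stabz}. A congruence transformation of the flow LMI with respect to $\tilde{S}:=S^{-1}$, together with the identity $\tilde{S}\dot{S}\tilde{S}=-\dot{\tilde{S}}$, turns it into $\bar{A}^T\tilde{S}(\tau)+\tilde{S}(\tau)\bar{A}-\dot{\tilde{S}}(\tau)\preceq0$, i.e.\ exactly the $S$-form flow condition~\eqref{eq:c1b} for the Lyapunov function $\tilde{S}$. For the jump LMI, after the linearizing change of variables $Y:=KS(0)$ its off-diagonal block $J_0S(0)+B_0Y$ is precisely $\bar{J}S(0)$, and a Schur complement on the block $-S(0)\prec0$ (the $\eps I$ term supplying strictness) shows the LMI equivalent to
\[
  \begin{bmatrix} I\\ \bar{J}^T \end{bmatrix}^T\begin{bmatrix} -S(\theta) & 0\\ 0 & S(0) \end{bmatrix}\begin{bmatrix} I\\ \bar{J}^T \end{bmatrix}\prec0 .
\]
The central matrix has $n+m$ positive and $n+m$ negative eigenvalues and the outer factor is of full column rank, so the dualization lemma~\citep{Scherer:00} applies and yields the equivalent inequality $\bar{J}^T\tilde{S}(\theta)\bar{J}-\tilde{S}(0)\prec0$ for every $\theta\in[T_{min},T_{max}]$.

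I would then observe that the pair obtained for $\tilde{S}$ is exactly the $S$-form of the ranged dwell-time conditions for the closed-loop pair $(\bar{A},\bar{J})$: the flow condition~\eqref{eq:c1b} together with the jump condition $\bar{J}^T\tilde{S}(\theta)\bar{J}-\tilde{S}(0)+\eps I\preceq0$, which is the aperiodic analogue of statement (e) of Theorem~\ref{th:imp_p} and is equivalent to Theorem~\ref{th:imp_a} by the same variational/time-reversal argument that relates statements (d) and (e). Indeed, integrating the flow condition over $[0,\theta]$ gives $\tilde{S}(\theta)\succeq e^{\bar{A}^T\theta}\tilde{S}(0)e^{\bar{A}\theta}$, whence the jump condition forces $\bar{J}^Te^{\bar{A}^T\theta}\tilde{S}(0)e^{\bar{A}\theta}\bar{J}-\tilde{S}(0)\prec0$, i.e.\ statement (a) of Theorem~\ref{th:imp_a} with the quadratic certificate $P=\tilde{S}(0)=S(0)^{-1}\succ0$. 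Positivity of $\tilde{S}(\theta)$ over the whole range is not imposed but follows automatically from the flow and jump inequalities, exactly as explained in the remark after Theorem~\ref{th:imp_p}. This establishes (b)$\Rightarrow$(a); since congruence, Schur complement, dualization and the change of variables $K=YS(0)^{-1}$ are all reversible, the same chain run backwards gives (a)$\Rightarrow$(b), and the equivalence is \emph{exact}. The uncertain/time-varying case (the $\tau$-dependence of $\bar{A}$) is handled by replacing exponentials with state-transition matrices and imposing the flow LMI at the polytope vertices, appealing to the convexity argument of Theorem~\ref{th:imp_p_rob}.

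The step I expect to be the main obstacle is the jump condition. One must apply the dualization lemma \emph{uniformly} in the parameter $\theta$ and verify that the inertia hypothesis holds for every $\theta\in[T_{min},T_{max}]$, and one must justify that the substitution $Y=KS(0)$ is not merely sufficient but \emph{lossless}. The latter is the crucial structural point: because the gain $K$ is \emph{time-invariant} and enters only the jump, a single matrix $S(0)$ suffices to linearize it, so—unlike the continuous-time gain $\tilde{K}_c(\tau)$ of the minimum dwell-time case—no difficulty with a time-varying or unbounded recovered gain arises, and the map $K\mapsto Y$ is globally invertible at $S(0)\succ0$. Showing that this structure renders the reduction exact, rather than only conservative, is the heart of the argument.
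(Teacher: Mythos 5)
Your proposal is correct and takes essentially the same approach the paper intends: Theorem \ref{th:sd_stabz_ap} is stated without proof precisely because it is the combination of the impulsive reformulation \eqref{eq:sd_sf_i}, the ranged dwell-time characterization of Theorem \ref{th:imp_a}, and the congruence/dualization/change-of-variables manipulations from the proof of Theorem \ref{th:imp_p_stabz}, which is exactly your chain. Two details you handle well deserve mention: you correctly read the off-diagonal block of the jump LMI as $J_0S(0)+B_0Y$ (the printed $J_0+B_0Y$ is a typo, as comparison with \eqref{eq:c2z} confirms), and your explicit integration step deriving $\bar{J}^Te^{\bar{A}^T\theta}S(0)^{-1}e^{\bar{A}\theta}\bar{J}-S(0)^{-1}\prec0$ for all $\theta\in[T_{min},T_{max}]$ is the right rigorous route to statement (a) of Theorem \ref{th:imp_a} (for the converse, note simply that $\tilde{S}(\tau)=e^{\bar{A}^T\tau}Pe^{\bar{A}\tau}$, i.e.\ the choice $W\equiv0$, already furnishes the required certificate), which is sounder than a literal ``time-reversal'' appeal since the substitution $R(\tau)=S(\bar{T}-\tau)$ does not directly carry over to the ranged dwell-time setting.
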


\begin{remark}\label{rk:K20}
Interestingly, it is also possible to impose $K_2=0$ without introducing any conservatism. Such a controller can indeed be designed by simply imposing the value 0 to $n\times m$ right-upper block of the matrix $S(0)$. The reason why this equality constraint is non-restrictive lies in the fact that $e^{\bar{A}\bar{T}}J$ is a block lower triangular matrix, and that it is well-known that stability of cascade systems (that are represented in terms of block triangular matrices) can be exactly characterized by block diagonal Lyapunov functions.
\end{remark}

\begin{example}\label{ex:sd1}
  Let us consider the sampled-data system~\eqref{eq:sd} with matrices
  \begin{equation}\label{eq:ex_sd1}
    A=\begin{bmatrix}
      0 &  1\\
      0 & -0.1
    \end{bmatrix}\ \text{and}\  B=\begin{bmatrix}
      0\\
      0.1
    \end{bmatrix}.
  \end{equation}

  \textbf{Fixed control law:} Assume first that the control law is given as in \citep{Branicky:00}  by $K_1=\begin{bmatrix}
    -3.75 & -11.5
  \end{bmatrix}$ and $K_2=0$. Results in the aperiodic case ($T_{min}$ has been set to 0.001) are summarized in Table \ref{tab:sd1a} together with some comparisons with previous ones based on functionals. The proposed approach yields results that are very close to the looped-functional approach developed in \cite{Seuret:13} together with a reduced computational complexity. %\rouge{The semidefinite program generated by SOSTOOLS involves, when $R$ or $S$ is of degree 4, 489 variables and 90 constraints whereas  the approach of \citep{Seuret:13} involves 2856 variables and 332 constraints when using a polynomial of order 3. The execution time, on an Intel i7, 2.70 Ghz, 4GB of RAM, including pre- and post-processing is about 1 second whereas it is approximately of 4.46 seconds for the looped functional approach of \citep{Seuret:13}.}
  The semidefinite program generated by SOSTOOLS involves, when $R$ or $S$ is of degree 4, 192 variables whereas  the approach of \citep{Seuret:13} involves 1256 variables when using a polynomial of order 3. The execution time is about 1 second whereas it is approximately of 4.46 seconds for the looped functional approach of \citep{Seuret:13}.

%
%  It is interesting to note that, in the periodic case, exact bounds on the sampling period are computed using a matrix polynomial $R$ of order 4.  Computation-wise, the proposed approach is much less expensive than previous methods based on looped-functionals \citep{Briat:11l,Briat:12h,Seuret:13} due to a smaller number of variables and constraints. In the aperiodic case, the results are almost identical to those that have been obtained using looped-functionals.  By transitivity, the results obtained with the current approach are therefore more accurate than those looped-functionals have been compared to, see e.g. \citep{Briat:11l,Seuret:13} for comparisons.% All these results are not summarized here for brevity.

 \textbf{Control design:} Assume now that the control gains $K_1$ and $K_2$ have to be determined such that the closed-loop system is stable for any inter-sampling times in $[T_{min},T_{max}]$. Applying then Theorem \ref{th:sd_stabz_ap}, we obtain the results gathered in Table \ref{tab:design} where, following Remark \ref{rk:K20}, $K_2=0$ has been imposed in the three last scenarios. We can see that the computed controllers involve numerical values with reasonable magnitude. %\rouge{The involved number of variables for degrees of $R$ or $S$ equal to 2 and 3 are 615 and 1161, respectively.}
 The involved number of variables for degrees of $R$ or $S$ equal to 2 and 3 are 201 and 432, respectively.
\end{example}

%\begin{example}\label{ex:sd2}
%    Let us consider the sampled-data system~\eqref{eq:sd} with matrices \citep{Seuret:13}
%  \begin{equation}\label{eq:ex_sd1b}
%    A=\begin{bmatrix}
%      -2 & 0\\
%      0 & -0.9
%    \end{bmatrix},\ B=I_2,\ K_1=\begin{bmatrix}
%      -1 & 0\\
%      -1 & -1
%    \end{bmatrix}\ \text{and}\  K_2=0.
%  \end{equation}
%  The results in the aperiodic case are summarized in Table \ref{tab:sd1a}. %\rouge{We can see that results using looped-functionals from \citep{Seuret:13} can be almost recovered when $R$ or $S$ is of degree 4, which corresponds to 866 variables and 150 constraints. The looped-functional approach of \citep{Seuret:13} considers a matrix function of order 5, which corresponds to 7014 variables and 528 constraints. The execution time, on an Intel i7, 2.70 Ghz, 4GB of RAM, including pre- and post-processing is about 1.14 seconds whereas it is approximately of 15.48 seconds when using looped-functionals \citep{Seuret:13}.}
%  %
%  We can see that results using looped-functionals from \citep{Seuret:13} can be almost recovered when $R$ or $S$ is of degree 4, which corresponds to 338 variables. The looped-functional approach of \citep{Seuret:13} considers a matrix function of order 5, which corresponds to 3414 variables. The execution time, on an Intel i7, 2.70 Ghz, 4GB of RAM, including pre- and post-processing is about 1.14 seconds whereas it is approximately of 15.48 seconds when using looped-functionals \citep{Seuret:13}.
%\end{example}

\begin{example}\label{ex:sd3}
  Let us consider the following sampled-data system~\eqref{eq:sd} with matrices
  \begin{equation}\label{eq:ex_sd2}
    A=\begin{bmatrix}
      0 &  1\\
      -2 & 0.1
    \end{bmatrix}\ \text{and}\  B=\begin{bmatrix}
      0\\
      1
    \end{bmatrix}
  \end{equation}
  borrowed from the time-delay system literature \citep{NicuGuAbd:03}. Assuming the control law $K_1=\begin{bmatrix}
    1 & 0
  \end{bmatrix}$ and $K_2=0$, we get the results of Table \ref{tab:sd1a}. %We obtain a result very close to the one of \citep{Seuret:13} using a matrix $R$ or $S$ of degree 6, which corresponds to 789 variables and 114 constraints. The result of \citep{Seuret:13} considering a matrix polynomial of order 5 involves 7014 variables and 528 constraints. The execution time, on an Intel i7, 2.70 Ghz, 4GB of RAM, including pre- and post-processing is about 1.12 seconds whereas it is approximately of 15.34 seconds for the looped functional approach of \citep{Seuret:13}.
  We obtain a result very close to the one of \citep{Seuret:13} using a matrix $R$ or $S$ of degree 6, which corresponds to 330 variables. The result of \citep{Seuret:13} considering a matrix polynomial of order 5 involves 3414 variables. The execution time is about 1.12 seconds whereas it is approximately of 15.34 seconds for the looped functional approach of \citep{Seuret:13}.
  %
  %As noticed in \citep{Briat:11b,Seuret:13}, this system exhibits several disjoint stability regions, both in the periodic and aperiodic cases. A tight estimate of the second stability region $\begin{bmatrix}2.4699,& 3.6968\end{bmatrix}$ in the periodic case is, for instance, given by $\begin{bmatrix}2.4699, & 3.6886\end{bmatrix}$.
\end{example}

%\begin{table}
%  \centering
%  \caption{Estimates on the minimum and/or maximum sampling period for the systems of Examples \ref{ex:sd1}, \ref{ex:sd2} and \ref{ex:sd3} - Aperiodic case.}\label{tab:sd1a}
%  \begin{tabular}{|c|c|c|c|c|c|}
%    \hline
%    \multirow{3}{*}{ } &  \multirow{2}{*}{$d_R$} & System \eqref{eq:ex_sd1} & System \eqref{eq:ex_sd1b} &\multicolumn{2}{c|}{System \eqref{eq:ex_sd2}}\\
%    \cline{3-6}
%     & & $T_{max}$ &  $T_{min}$ & $T_{min}$ &$T_{max}$\\
%      \hline
%      \hline
%     \multirow{3}{*}{Th. \ref{th:imp_p}}
%      %&2 & 0.4504 & 2.1496 & 0.4 & 0.8056\\
%%      &4 & 1.7250 & 3.2480 & 0.4 & 1.6317\\
%%      &6 & 1.7279 & 3.1933 & 0.4 &1.8270\\
%      &2 & 1.0851 & 2.1496 & 0.4 & 0.8055\\
%      &4 & 1.7279 & 3.2480 & 0.4 & 1.6316\\
%      &6 & 1.7252 & 3.1933 & 0.4 &1.8270\\
%      \hline
%     \citep{Fridman:04} & -- & 0.869 & 0.99 & --& --\\
%     \citep{Naghshtabrizi:08} & -- &  1.113& 1.88 & -- & --\\
%      \citep{Fridman:10} & -- &  1.695 &2.03 & --& --\\
%      \citep{Liu:10} & -- & 1.695 &2.53 & -- & --\\
%      \citep{Seuret:12} & -- &  1.723 & 2.62 & 0.400 & 1.251\\
%      \hline
%       \multirow{2}{*}{\citep{Seuret:13}} & 3 & 1.7294 & 3.218 & 0.4 &1.820\\
%       & 5 & 1.7294 & 3.269 & 0.4&1.828\\
%%      \hline
%%      Periodic case & -- & 1.7294 & 3.2715 & 0.2006 & 2.0207 \\
%      \hline
%  \end{tabular}
%\end{table}

\begin{table}
  \centering
  \caption{Estimates on the minimum and/or maximum sampling period for the systems of Examples \ref{ex:sd1} and \ref{ex:sd3} - Aperiodic case.}\label{tab:sd1a}
  \begin{tabular}{|c|c|c|c|c|}
    \hline
    \multirow{3}{*}{ } &  \multirow{2}{*}{$d_R$} & System \eqref{eq:ex_sd1} & \multicolumn{2}{c|}{System \eqref{eq:ex_sd2}}\\
    \cline{3-5}
     & & $T_{max}$ &  $T_{min}$ &$T_{max}$\\
      \hline
      \hline
     \multirow{2}{*}{Th. \ref{th:imp_p}}
  %    &2 & 1.0851  & 0.4 & 0.8055\\
      &4 & 1.7279  & 0.4 & 1.6316\\
      &6 & 1.7252  & 0.4 &1.8270\\
      \hline
     \citep{Fridman:04} & -- & 0.869 & --& --\\
     \citep{Naghshtabrizi:08} & -- &  1.113 & -- & --\\
      \citep{Fridman:10} & -- &  1.695  & --& --\\
      \citep{Liu:10} & -- & 1.695  & -- & --\\
      \citep{Seuret:12} & -- &  1.723  & 0.400 & 1.251\\
      \hline
       \multirow{2}{*}{\citep{Seuret:13}} & 3 & 1.7294  & 0.4 &1.820\\
       & 5 & 1.7294  & 0.4&1.828\\
      \hline
  \end{tabular}
\end{table}

\begin{table}
  \centering
  \caption{Control design results for system~\eqref{eq:ex_sd1} using Theorem \ref{th:sd_stabz_ap}}\label{tab:design}
  \begin{tabular}{|c|c||c|c|c|}%c|}
    \hline
   $T_{min}$ & $T_{max}$ & $K_1$ & $K_2$ & $d_R$\\% & computation time\\
   \hline
   \hline
%   & 6 & $\begin{bmatrix}
  %  -0.2185 & -1.3276
  %\end{bmatrix}$ & 0.0022 & 2\\
  \multirow{2}{*}{0.001}  & 10 & $\begin{bmatrix}
    -0.1145  & -0.8088
  \end{bmatrix}$ & -0.0024 & 2\\
  & 50 & $\begin{bmatrix}
 -0.0202  & -0.1560
  \end{bmatrix}$ & -0.0030 & 2\\
  \hline
    \multirow{2}{*}{0.001} & 10 & $\begin{bmatrix}
-0.0310  & -0.3222
  \end{bmatrix}$ & 0 & 3\\
%  & 20 &  $\begin{bmatrix}
 %-0.0074 & -0.0811
 % \end{bmatrix}$ & 0 & 3\\
  & 50 & $\begin{bmatrix}
 -0.0259 & -0.2726
  \end{bmatrix}$ & 0 & 4\\
  \hline
  \end{tabular}
\end{table}

\subsection{Robust stabilization of periodic and aperiodic sampled-data systems}

Results on robust stabilization of sampled-data systems are straightforward extensions of Theorem \ref{th:sd_stabz_ap}; they are omitted for brevity. Only the following example is discussed:
\begin{example}
  Let us consider the uncertain sampled-data system~\eqref{eq:sd} with matrices
\begin{equation}\label{eq:ex_sd3}
A\in\mathcal{A}=\co\left\{\begin{bmatrix}
    0 & 1\\
    0 & -0.1
  \end{bmatrix},\delta\begin{bmatrix}
    0 & 1\\
    0 & -0.1
  \end{bmatrix}\right\}\ \text{and}\ B=\begin{bmatrix}
    0\\
    1
  \end{bmatrix}
\end{equation}
where $\delta$ is a positive parameter. We then apply Theorem~\ref{th:sd_stabz_ap} to design robust state-feedback controllers for different values for $\delta>0$ and $T_{max}>0$. The results are summarized in Table \ref{tab:design2} where we can see that the system can be stabilized for a quite wide range of values for the parameter $\delta$ and the maximal sampling period $T_{max}$. %\rouge{For a polynomial or order 2, the semidefinite program involve 732 variables and 171 constraints. The execution time including pre- and post-processing is about 2.23 seconds on an Intel i7, 2.70 Ghz, 4GB of RAM.}
For a polynomial or order 2, the semidefinite program involve 237 variables. The execution time including pre- and post-processing is about 2.23 seconds.

\begin{table}
  \centering
  \caption{Control design results for system~\eqref{eq:ex_sd3} using Theorem \ref{th:sd_stabz_ap}}\label{tab:design2}
  \begin{tabular}{|c|c|c||c|c|c|}%c|}
    \hline
   $\delta$ & $T_{min}$ & $T_{max}$ & $K_1$ & $K_2$ & $d_R$\\% & computation time\\
   \hline
   \hline
   5 &\multirow{2}{*}{0.001} & 10 & $\begin{bmatrix}
   -0.0757 & -0.7306
  \end{bmatrix}$ & -0.0006 & 2\\
   5 & & 20 & $\begin{bmatrix}
    -0.0411  & -0.3835
  \end{bmatrix}$ & -0.0022 & 2\\
  \hline
20 &\multirow{2}{*}{0.001} & 10 & $\begin{bmatrix}
   -0.0578  & -0.5560
  \end{bmatrix}$ & -0.0025 & 2\\
  20 & & 20 & $\begin{bmatrix}
  -0.0339 & -0.3121
  \end{bmatrix}$ & -0.0019 & 2\\
  \hline
  \end{tabular}
\end{table}
%  \textbf{Periodic case:} Choosing, for instance, $\bar{T}=6$, we find the control gains $K_1=\begin{bmatrix}
%-0.1536 &  -1.5032
%  \end{bmatrix}$ and $K_2=-0.0026$ using a matrix polynomial $R$ of order 2.
%
%  \textbf{Aperiodic case:} Considering now the aperiodic case with $T_{min}=0.001$ and $T_{max}=6$, we obtain the gains $K_1=\begin{bmatrix}
%  -0.1289  & -1.2362
%  \end{bmatrix}$ and $K_2=-0.0033$ using a matrix polynomial $R$ of order 2. When $T_{max}=2$, we get $K_1=\begin{bmatrix}
%  -0.0411  & -0.3835
%  \end{bmatrix}$ and $K_2=-0.0022$ using a matrix polynomial $R$ of order 2
\end{example}

%\section{Future works}
%%
%
%Addressing the cases of homogeneous polynomial Lyapunov functions and nonlinear impulsive systems are possible generalizations of the approach. Notably, homogeneous Lyapunov function can be used to vanish the conservatism of the approach for linear systems, following to \citep{Wirth:05,Chesi:12}. There is also hope to be able to apply these results to other classes of systems or more complex ones, e.g. with delays. Complexity reduction is also an important and interesting research direction.

%\bibliographystyle{elsarticle-harv}
\bibliographystyle{plainnat}

\end{document}